\theoremstyle{plain}
\newtheorem{thm}{Theorem}[section]
\newtheorem{lemma}[thm]{Lemma}
\newtheorem{prop}[thm]{Proposition}
\newtheorem{cor}[thm]{Corollary}
\theoremstyle{definition}
\newtheorem{defn}[thm]{Definition}
\theoremstyle{remark}
\newtheorem{remark}[thm]{Remark}
\newcommand{\nc}{\newcommand}
\def\makeop#1{\expandafter\def\csname#1\endcsname
  {\mathop{\rm #1}\nolimits}\ignorespaces}
\def\makebb#1{\expandafter\def
  \csname bb#1\endcsname{{\mathbb{#1}}}\ignorespaces}
\def\makebf#1{\expandafter\def\csname bf#1\endcsname{{\bf
      #1}}\ignorespaces} 
\def\makegr#1{\expandafter\def
  \csname gr#1\endcsname{{\mathfrak{#1}}}\ignorespaces}
\def\makescr#1{\expandafter\def
  \csname scr#1\endcsname{{\EuScript{#1}}}\ignorespaces}
\def\makecal#1{\expandafter\def\csname cal#1\endcsname{{\mathcal
      #1}}\ignorespaces} 
\def\doLetters#1{#1A #1B #1C #1D #1E #1F #1G #1H #1I #1J #1K #1L #1M
                 #1N #1O #1P #1Q #1R #1S #1T #1U #1V #1W #1X #1Y #1Z}
\def\doletters#1{#1a #1b #1c #1d #1e #1f #1g #1h #1i #1j #1k #1l #1m
                 #1n #1o #1p #1q #1r #1s #1t #1u #1v #1w #1x #1y #1z}
     \def\qed{\qedmark\medbreak}%
\def\qedmark{{\enspace\vrule height 6pt width 5pt depth 1.5pt}}%
\def\Gm{{{\bbG}_{\rm m}}}   
\def\Fq{{\bbF}_q}
\newcommand{\Z}{\mathbb Z}
\newcommand{\Q}{\mathbb Q}
\newcommand{\R}{\mathbb R}
\renewcommand{\H}{\mathbb H}  
\newcommand{\A}{\mathbb A}    
\newcommand{\pr}{\indent }
\newcommand{\isoto}{\stackrel{\sim}{\longrightarrow}}
\nc{\embed}{\hookrightarrow}
\nc{\ol}{\overline}
\nc{\wt}{\widetilde}
\nc{\opp}{\mathrm{opp}}
\def\wh{\widehat}
\begin{document}
\renewcommand{\thefootnote}{\fnsymbol{footnote}}
\setcounter{footnote}{-1}
\numberwithin{equation}{section}


\title{Variations of mass formulas for definite division algebras}
\author{Chia-Fu Yu}
\address{
Institute of Mathematics, Academia Sinica and NCTS (Taipei Office)\\
6th Floor, Astronomy Mathematics Building \\
No. 1, Roosevelt Rd. Sec. 4 \\ 
Taipei, Taiwan, 10617} 
\email{chiafu@math.sinica.edu.tw}
\address{
The Max-Planck-Institut f\"ur Mathematik \\
Vivatsgasse 7, Bonn \\
Germany 53113} 
\email{chiafu@mpim-bonn.mpg.de}


\date{\today}
\subjclass[2010]{11R52,11R58}
\keywords{mass formula, global fields, 
central division algebras} 

\begin{abstract}
The aim of this paper is to organize some known mass formulas 
arising from a definite central division algebra over a global field
and to deduce some more new ones.  
\end{abstract} 

\maketitle


\def\Mass{{\rm Mass}}
\def\DS{{\rm DS}}
\def\vol{{\rm vol}}
\def\ad{{\rm ad}}
\def\pr{{\rm pr}}
\def\rad{{\rm rad}}

\section{Introduction}
\label{sec:01}

Let $K$ be a global field and $A$ be the ring of $S$-integers in $K$,
where $S$ be a non-empty finite set of places of $K$ that contains 
all Archimedean places if $K$ is a number field. 
Let $D$ be a central division algebra $D$ of degree $n\ge 2$
over $K$ 
that is is {\it definite} relative to $S$. 
This means that for all places $v\in S$ 
the completion $D_v:=D\otimes_K K_v$ of $D$ at 
$v$  remains a {\it division} algebra over $K_v$. 
When $K$ is a number field, the definite condition implies that 
$K$ is necessarily totally real and that $D$ is a totally definite 
{\it quaternion} algebra (the completions at all real places are Hamiltion
quaternion algebras). There are extensive studies for these
quaternion algebras over totally real fields in various aspects (mass
formulas, class number formulas,  modular forms, theta series etc) by
Eichler and many others.
In this paper we studies three mass formulas arising from the 
algebra $D$ and an $A$-order $R$ in $D$.
 

The first one is the more classical mass associated to the pair
$(D,R)$ using algebras, which 
dates back to Deuring and Eichler; see \cite{eichler:crelle55},
cf. \cite{vigneras}.
Let $\{I_1, \dots, I_h\}$ be a complete set of representatives of the
right locally principal ideal classes of $R$. Define the 
{\it mass of $(D,R)$} by
\begin{equation}
  \label{eq:11}
  \Mass(D,R):=\sum_{i=1}^h [R_i^\times:A^\times]^{-1},  
\end{equation}
where $R_i$ is the left order of $I_i$. See Section~\ref{sec:23} for
detailed discussions.

Another two masses are defined by group theory. Recall that if 
a reductive group
$G$ over $K$ has finite $S$-arithmetic subgroups, then for any open
compact subgroup $U\subset G(\A^S)$, where $\A^S$ is the
prime-to-$S$ adele ring of $K$, one can associate the mass $\Mass(G,U)$
as the weight sum over the double coset space $\DS(G,U)=G(K)\backslash
G(\A^S)/U$ (see Section~\ref{sec:22}).  
Now let $G$ be the
multiplicative group of $D$ viewed as an algebraic group over 
$K$. 
Let $G_1$ denote the reduced norm one subgroup
of $G$ and $G^\ad$ the adjoint group of $G$. 
The definite condition implies 
that the groups $G_1(K)$ and $G^\ad(K)$ have finite $S$-arithmetic
subgroups (Section~\ref{sec:22}).  
Put $U:=\wh R^\times\subset G(\A^S)$, 
where $\wh R=\prod_{v\not\in S} R_v$ is the profinite
completion of $R$. 
Put $U_1:=U\cap G_1(\A^S)$ and let $U^\ad\subset
G^\ad(\A^S)$ be the image of $U$. Using the vanishing of the first Galois 
cohomology one 
shows that the induced projection $\pr:G(\A^S)\to G^\ad (\A^S)$ is
open and surjective, particularly that $U^\ad$ is an open compact
subgroup. Therefore we have defined the masses $\Mass(G_1,U_1)$ 
and $\Mass(G^\ad,U^\ad)$.

The main contents of this article are to
compare these masses and to compute them explicitly. 
Our first main result is the following; see Theorem~\ref{32} and
Corollary~\ref{38}. 



\begin{thm}\label{11} 
We have
  \begin{equation}
    \label{eq:1.2}
     \Mass(D,R)=h_A\cdot \Mass(G^\ad, U^\ad), 
  \end{equation}
  where $h_A$ is the class number of $A$. Moreover, we have 
\begin{equation}    
\label{eq:1.3}
    \Mass(G^\ad, U^\ad)= c(S,U) \cdot \Mass(G_1,U_1), 
  \end{equation}
where 
\begin{equation}
  \label{eq:1.4}
  c(S,U)=
  \begin{cases}
   n^{-(|S|-1)} [\wh A^\times: \Nr(U)]  & \text{if $K$ is a
   function field;} \\ 
   2^{-(|S|-|\infty|-1)} [\wh A^\times: \Nr(U)]  & \text{if $K$ is a
   totally real field.} \\  
  \end{cases}
\end{equation}
Here $\Nr:G(\A^S)\to \A^{S,\times}$ denotes the reduced norm map, $\wh
A=\prod_v A_v$ is the profinite completion of $A$ and $\infty$ is the
set of Archimedean places of the number field $K$.  
\end{thm}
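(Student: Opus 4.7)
The plan is to prove formula~\parref{eq:1.2} first by analyzing the canonical projection
$$\pi : \DS(G, U) \longrightarrow \DS(G^{\ad}, U^{\ad})$$
induced by $\pr : G \to G^{\ad}$, which is surjective on $\A^S$-points by Hilbert~90 for the center $Z=\Gm$. I would identify $\DS(G, U)$ with the set $\{[I_1],\ldots,[I_h]\}$ of right locally principal $R$-ideal classes via the standard bijection $[g]\mapsto [D\cap g\wh R]$, so that the stabilizer in $G(K)$ of $[I_i]$ is the left-order unit group $R_i^\times$. A key observation is that central adelic translations preserve the left order of an ideal, so $R_i^\times$ is constant along each fiber of $\pi$; write $R_F^\times$ for this common group when $F=\pi^{-1}([g_j^{\ad}])$.

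For each $[g_j^{\ad}]$ with finite stabilizer $\Gamma_j^{\ad} := G^{\ad}(K)\cap g_j^{\ad} U^{\ad}(g_j^{\ad})^{-1}$, I would construct a homomorphism
$$\chi_j : \Gamma_j^{\ad} \longrightarrow \Cl(A) = K^\times\backslash \A^{S,\times}/\wh A^\times$$
sending $\delta$ to the class of the central adele $z\in Z(\A^S)=\A^{S,\times}$ witnessing $\delta = \pr(z \cdot g_j u g_j^{-1})$ for some $u\in U$; this is well-defined because $Z(\A^S)\cap \wh R^\times = \wh A^\times$. A direct chase then shows that $\pi^{-1}([g_j^{\ad}])$ is in canonical bijection with $\Cl(A)/\im(\chi_j)$ while $R_F^\times/A^\times = \ker\chi_j$, and therefore
$$\sum_{[I_i]\in \pi^{-1}([g_j^{\ad}])} \frac{1}{[R_i^\times:A^\times]} \;=\; \frac{h_A}{|\im(\chi_j)|\cdot|\ker\chi_j|} \;=\; \frac{h_A}{|\Gamma_j^{\ad}|}.$$
Summing over $j$ yields formula~\parref{eq:1.2}.

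For formula~\parref{eq:1.3}, the approach is to use the isogeny $\phi : G_1 \to G^{\ad}$ (with $\ker\phi = \mu_n$) together with the exact sequence $1\to G_1 \to G\xrightarrow{\Nr}\Gm\to 1$ to compare $\Mass(G_1, U_1)$ with $\Mass(G^{\ad}, U^{\ad})$. I would factor the resulting ratio into two pieces: first, a global $S$-id\`ele class contribution equal to $[\wh A^\times : \Nr(U)]$, coming from the reduced-norm image on double cosets and the identification $|K^\times\backslash\A^{S,\times}/\Nr(U)| = h_A\cdot[\wh A^\times:\Nr(U)]$ (the class-number factor having already been extracted in~\parref{eq:1.2}); and second, a local-at-$S$ contribution measuring the index of the image of $G_1(K_v)$ inside $G^{\ad}(K_v)$, which at each non-Archimedean $v\in S$ yields a factor of $n$ (from $D_v^\times/G_1(K_v)K_v^\times \simeq K_v^\times/(K_v^\times)^n$ under reduced norm, combined with the local Hilbert-symbol computation) and at each real place yields a factor of $2$ (since $\Nr(\mathbb H^\times) = \R_{>0}$ has index $2$ in $\R^\times$).

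The main obstacle is the precise accounting of the local-at-$S$ exponent. A na\"\i ve product would give $n^{|S|}$ (resp.\ $2^{|S|-|\infty|}$), but one factor is absorbed by global reciprocity for the $n$-th power norm residue symbol, reflecting the one-dimensional redundancy in the diagonal map $K^\times/(K^\times)^n \to \bigoplus_{v\in S}K_v^\times/(K_v^\times)^n$ coming from the Hasse principle for the Kummer sequence. Tracking this global-to-local mechanism carefully, while compensating for the failure of strong approximation for $G_1$ with respect to $S$ (since $G_1(K_S)$ is compact in the definite case), produces the exponent $|S|-1$ (resp.\ $|S|-|\infty|-1$) in $c(S,U)$.
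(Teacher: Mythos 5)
Your argument for \parref{eq:1.2} is correct and is essentially the paper's own proof in different clothing: the paper analyzes the action of $\Pic(A)$ on $\DS(G,U)$, identifies the fiber of $\pr$ over $[c]^\ad$ with $\Pic(A)/\Stab([c])$, and proves (Lemma~\parref{31}) that $\Stab([c])\simeq \Gamma^\ad_c/(R_c^\times/A^\times)$; your homomorphism $\chi_j$ packages exactly this, with $\ker\chi_j=R_c^\times/A^\times$ and $\im\chi_j=\Stab([c])$, and the fiber-by-fiber summation is identical. This half is fine.

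The proof of \parref{eq:1.3} has a genuine gap. Your first step --- extracting the factor $[\wh A^\times:\Nr(U)]$ so as to reduce to a maximal order $\wt R$ --- is correct and is the paper's Lemma~\parref{35} (it follows from $[\wt U:U]=[\wt U_1:U_1]\cdot[\wh A^\times:\Nr(U)]$ and $[\wt U^\ad:U^\ad]=[\wt U:U]$; your route through $|K^\times\backslash\A^{S,\times}/\Nr(U)|$ is shakier, since that double coset count also involves $\wh A^\times\cap K^\times\Nr(U)$, but this is repairable). The real problem is the evaluation of $c(S,\wt U)$ for maximal orders, which you have not actually carried out. Concretely: (a) the group $K_v^\times/(K_v^\times)^n$ has order $n\,|\mu_n(K_v)|\,/\,|n|_v$, not $n$ (already for $n=2$ and $v$ finite it has order at least $4$), so it cannot be the source of a local factor $n$; the correct local input at a finite $v\in S$ is $[G^\ad(K_v):G^\ad(O_v)]=n$, coming from the value group $D_v^\times/K_v^\times O_{D_v}^\times\simeq\Z/n\Z$, paired with the fact that $G_1(K_v)=G_1(O_{D_v})$ is already compact, so removing $v$ from $S$ changes $\Mass(G^\ad,\cdot)$ by $n$ and $\Mass(G_1,\cdot)$ not at all. (b) Your archimedean bookkeeping is internally inconsistent: you assert a factor of $2$ per real place, yet your ``na\"ive product'' $2^{|S|-|\infty|}$ omits the real places entirely; the correct statement is that the \emph{net} archimedean contribution in the totally real case is a single factor of $2$, and seeing this requires comparing $\vol(G_1(\R))=2\vol(G^\ad(\R))$ against the ratio of Tamagawa numbers $\tau(G^\ad)/\tau(G_1)=n$ and the finite-place measure normalizations --- none of which appears in your sketch. (c) The sentence ``one factor is absorbed by global reciprocity'' is precisely the hard step, asserted rather than proved.

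For contrast, the paper does not attempt this local--global volume comparison at all: it computes $\Mass(G^\ad,U^\ad)$ and $\Mass(G_1,U_1)$ \emph{separately} for maximal orders, by reducing from general $S$ to $S=\infty$ via the volume identities $\Mass(G^\ad,U^\ad,S)=n^{-(|S|-|\infty|)}\Mass(G^\ad,U^\ad,\infty)$ and $\Mass(G_1,U_1,S)=\Mass(G_1,U_1,\infty)$, and then quoting the known explicit formulas of Eichler (number field case) and Denert--Van~Geel / Wei--Yu (function field case); the constant $c(S,\wt U)$ is read off as the ratio (the author explicitly calls this step ``ad hoc''). To complete your argument you would either have to do the same, or genuinely carry out a Prasad-style comparison of $S$-arithmetic volumes for the isogeny $G_1\to G^\ad$, including the Tamagawa number $\tau(G^\ad)=n$ and the cokernel of $K^\times/(K^\times)^n\to\bigoplus_{v\in S}K_v^\times/(K_v^\times)^n$. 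As written, the exponent $|S|-1$ (resp.\ $|S|-|\infty|-1$) is reverse-engineered from the answer rather than derived.
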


Thus, knowing one of the three masses will allow us to compute the
other two.  
For $\Mass(D,R)$ we obtain the following formula; see Theorem~\ref{42}. 


\begin{thm}\label{12}  
We have 
  \begin{equation}
    \label{eq:1.5}
    \Mass(D,R)=\frac{h_A}{n^{|S|-1}}\cdot 
    \prod_{i=1}^{n-1} |\zeta_K(-i) |\cdot \prod_{v} \lambda_v(R_v),
  \end{equation}
  where $\zeta_K(s)$ is the Dedekind zeta function of $K$, $v$
  runs through all non-Archimedean places of $K$ and the local term
  $\lambda_v(R_v)$ 
  is defined in (\ref{eq:4.10}). 
\end{thm}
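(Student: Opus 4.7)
The plan is to invoke Theorem~\ref{11} to reduce the problem to computing $\Mass(G_1, U_1)$ and then evaluate the latter using the fact that $G_1 = \SL_1(D)$ is semisimple and simply connected.

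First, combining \eqref{eq:1.2} and \eqref{eq:1.3} gives
\begin{equation*}
\Mass(D,R) = h_A \cdot c(S,U) \cdot \Mass(G_1, U_1),
\end{equation*}
so one need only establish a product formula for $\Mass(G_1, U_1)$. Since the definite hypothesis makes $G_1(K_v)$ compact for every $v \in S$, the mass can be written as a Tamagawa volume,
\begin{equation*}
\Mass(G_1, U_1) = \tau(G_1) \cdot \vol(U_1)^{-1} \prod_{v \in S} \vol(G_1(K_v))^{-1},
\end{equation*}
for a suitable choice of Haar measures assembled into the Tamagawa measure on $G_1(\A)$. The Kneser--Weil theorem for simply connected groups gives $\tau(G_1) = 1$.

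Second, I would decompose $\vol(U_1) = \prod_{v \notin S} \vol(U_{1,v})$ and evaluate each local volume. At unramified finite $v$ the local volume reproduces the Euler factor of $\zeta_K$ at $s = 2, 3, \dots, n$, producing after multiplication a partial Dedekind zeta product. At the remaining finite $v$ (where $D_v$ is ramified or $R_v$ is non-maximal) the local volumes collapse into the local factors $\lambda_v(R_v)$ appearing in \eqref{eq:4.10}. The compact volumes $\vol(G_1(K_v))^{-1}$ for $v \in S$, together with the Archimedean Euler factors when $K$ is a number field, contribute the missing gamma-factors (or $q$-factors in the function field case) that complete the partial zeta product to $\prod_{i=1}^{n-1} \xi_K(i+1)$, where $\xi_K$ denotes the completed Dedekind zeta function.

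Third, the functional equation $\xi_K(s) = \xi_K(1-s)$ converts each $\xi_K(i+1)$ to $|\zeta_K(-i)|$, and combining this with the explicit expression for $c(S,U)$ given in \eqref{eq:1.4} yields the prefactor $h_A / n^{|S|-1}$. The main technical obstacle will be the bookkeeping of Haar measure normalizations, in particular matching the Tamagawa measure against the local measures implicit in the definition of $\lambda_v(R_v)$, and handling the function field and totally real number field cases in a uniform manner, since the passage from $\zeta_K(i+1)$ to $|\zeta_K(-i)|$ is mediated by quite different Archimedean or residue-field data in the two settings.
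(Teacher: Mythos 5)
Your route is genuinely different from the paper's. The paper does not compute any adelic volumes from scratch: it quotes the known mass formulas for \emph{maximal} orders with $S=\infty$ (Eichler in the totally real case, Denert--Van Geel and Wei--Yu in the function field case), extends them to general $S$ by the elementary volume comparison (\ref{eq:3.26}), and then handles an arbitrary order $R$ purely through Corollary~\ref{33}, i.e.\ by computing the local index $[\wt R_v^\times:R_v^\times]$ in Lemma~\ref{41} in terms of the reduced discriminant and the residue algebra $\kappa(R_v)$, and verifying the local identity $\lambda_v\cdot[\wt R_v^\times:R_v^\times]=\lambda_v(R_v)$ in (\ref{eq:4.15}). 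Your plan instead re-derives the maximal-order input itself via $\tau(G_1)=1$ and local Tamagawa volumes. That is a legitimate and more self-contained strategy (it is essentially how the cited maximal-order formulas were originally proved), but it buys uniformity at the cost of redoing the hardest analytic input, whereas the paper's argument is short precisely because it treats that input as a black box and isolates the genuinely new content in a finite local computation.

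Two points in your outline are not mere bookkeeping and would need real work. First, the step ``at the remaining finite $v$ the local volumes collapse into $\lambda_v(R_v)$'' is exactly where the content of Lemma~\ref{41} and (\ref{eq:4.15}) lives: for non-maximal $R_v$ the integral model of $G_1$ whose $A_v$-points are $U_{1,v}$ is not smooth, so you cannot quote the standard point-count formula for $\vol(U_{1,v})$; you must either smoothen (as in Gan--Yu) or compute $\vol(R_v^\times)$ directly from $|\grd(R_v)|$ and $|\kappa(R_v)^\times|/|\kappa(R_v)|$ and then pass to the norm-one subgroup. Second, that passage to the norm-one subgroup introduces the factor $\vol(\Nr(R_v^\times))$, which must be cancelled against the global index $[\wh A^\times:\Nr(U)]$ hidden in $c(S,U)$ from (\ref{eq:1.4}); if you do not track this, the local factors you obtain will not match the definition (\ref{eq:4.10}) of $\lambda_v(R_v)$, which is stated in terms of $R_v^\times$ rather than $U_{1,v}$. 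With those two gaps filled, your argument should close; as written it is a correct high-level plan whose decisive computations remain to be done.
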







%

In the case where $D$ is a quaternion algebra, i.e. $n=2$, 
Theorem~\ref{12} gives rise to a more explicit formula
(see Corollary~\ref{43}) which was obtained first by K\"orner 
in the number field case 
(see \cite[Theorem 1]{korner:1987}, also see \cite{korner:1985} for the
computation). The mass formula proved by K\"orner is used further
by Brzezinski \cite{brzezinski:classno} 
to classify orders in all definite quaternion algebras over 
$\Q$ with class number one. We remark that definite Eichler orders
$\calO$ of
class number $h(\calO)\le 2$ are classified in Kirschmer and Voight 
\cite{kirschmer-voight:quat_orderSIAM2010} 





The proof of (\ref{eq:1.2}) is analyzing the action of the Picard
group $\Pic(A)$ on the double coset space $\DS(G,U)$ and comparing the
two masses from the definition. The proof of (\ref{eq:1.3}) is first
to reduce the case where $R$ is maximal, and in this case we compute
the factor $c(S,U)$ from the explicit formula for $\Mass(G^\ad,
U^\ad)$ and $\Mass(G_1,U_1)$.  

The proof of Theorem~\ref{12} is similar to that in K\"orner
\cite{korner:1987} which starts the (known) 
mass formula for maximal orders and computes explicitly the local terms.  
Using the interpretation of masses as volumes of fundamental domains,
we can reduce the mass formula for maximal orders to the classical case 
(i.e. $S=\infty$ or ``the place at infinity'' in the function field
case), which is well known due to Eichler in the number field case 
and is due to Denert-Van Geel \cite{denert-geel:classno88} 
in the function field case (also see different proofs
in Wei and the author~\cite{wei-yu:mass_division}). 
In the latter case, the mass formula
was used in Denert and Van Geel~\cite{denert-geel:crelle1986} to prove 
the cancellation property for $\Fq[t]$-orders in definite 
central division algebras over $K=\Fq(t)$. 

Though there is no new idea added in the proof of Theorem~\ref{12}, it is
convenient to have an explicit formula 
for some arithmetic and geometric applications 
(e.g. estimating class numbers and computing certain supersingular
objects, see \cite{brzezinski:classno, kirschmer-voight:quat_orderSIAM2010, 
denert-geel:crelle1986, gekeler:quaternion, gekeler:mass, yu:thesis, 
yu:mass_hb, yu:ss_siegel, yu:gmf, yu-yu:mass_surface}).  
  
We remark that mass formulas for more general groups
have been determined by Prasad \cite{prasad:s-volume}, Gan
and Gross \cite{gross-gan}, Shimura (cf. \cite{shimura:1999}) and
Gan-Hanke-Yu \cite{gan-hanke-yu, gan-yu}. We refer the interested
reader to their papers for more mass formulas.

This paper is organized as follows. Section~\ref{sec:02} discusses 
variants of masses arising from a definite central division
algebra. 
Section~\ref{sec:03} compares these masses (Theorem~\ref{11}) and
deduces a mass formula (Theorem~\ref{12}) in the case 
where $R$ is a maximal order. In
Section~\ref{sec:04} we compute the local indices and prove
Theorem~\ref{12}. 
The last section discusses a mass formula for types of orders. 

 
\section{Definitions of masses}
\label{sec:02}

\subsection{Setting}
\label{sec:21}

Let $K$ be a global field.  Let $S$ be a non-empty finite set of places
of $K$ that contains all Archimedean places if $K$ is a number field
or contains a fixed place $\infty$ if $K$ is a global function field. 
We also write $\infty$ for the set of Archimedean places 
when $K$ is a number field.
Let $A$ be the ring of
$S$-integers. If $K$ is a number field and $S=\infty$, 
then $A$ is nothing but the ring of integers in $K$ 
which is usually denoted by $O_K$. 
Let $V^K$ (resp. $V^K_f$) denote the set of all
(resp. all non-Archimedean) places of $K$.
There is a natural one-to-one bijection
between the set of places $v\not\in S$ and the set ${\rm Max}(A)$ 
of non-zero prime ideals of $A$.  
For any place $v$ of $K$, let $K_v$ denote the
completion of $K$ at $v$. If $v$ is non-Archimedean, then let $O_v$ 
denote the valuation ring, $k(v)$ the residue field and $q_v$
its cardinality.  
In case $v\not\in S$, one also writes $A_v$ for $O_v$, 
the completion of $A$ at $v$. Write
$|I|:=|A/I|$ (resp. $|I_v|:=|A_v/I_v|$) if $I\subset A$
(resp. $I_v\subset A_v$) is a non-zero integral ideal. 
Let $\A$ denote the adele ring of $K$, $\A^S:=\prod'_{v\not\in S} K_v$ 
the prime-to-$S$ adele ring of $K$ and $\A_S:=\prod_{v\in S} K_v$. One
has $\A=\A_S\times \A^S$. Write $\wh A=\prod_{v\not \in S}A_v$ for the
profinite completion of $A$. For any finitely generated $A$-module
$R$, write $\wh R:=R\otimes_A \wh A$.

Let $G$ be a reductive algebraic group over $K$. Recall that an
{\it $S$-arithmetic subgroup of $G$} is a subgroup of the group 
$G(K)$ of $K$-rational
points which is commensurable to the intersection of $G(K)$ 
with an open compact subgroup $U$ of $G(\A^S)$. 
If an $S$-arithmetic subgroup of $G$ is finite, 
then every $S$-arithmetic subgroup of $G$ is finite. 

For any open compact subgroup $U\subset G(\A^S)$, we write
$\DS(G,U)$ for the double coset space 
$G(K)\backslash G(\A^S)/U$. 
By the finiteness of class numbers due to Harish-Chandra and Borel
\cite{borel:finite}, the set $\DS(G,U)$ is always finite. 



\subsection{Mass of $(G,U)$}
\label{sec:22}

Suppose that any $S$-arithmetic subgroup of $G$ is finite. For any
open compact subgroup $U\subset G(\A^S)$, we define the mass of
$(G,U)$ by 
\begin{equation}
  \label{eq:21}
  \Mass(G,U):=\sum_{i=1}^h |\Gamma_{c_i}|^{-1}, 
\end{equation}
where $c_1,\dots, c_h$ are representatives for the double coset space
$\DS(G,U)$ and $\Gamma_{c_i}:=G(K)\cap c_i U c_i^{-1}$ 
for $i=1,\dots, h$.  
Note that $\Gamma_{c_i}=\{g\in G(K)\mid g(c_iU)=c_iU\}$ and it is 
finite.

If $G_S:=G(\A_S)$ is compact, then any $S$-arithmetic subgroup is
discretely embedded into the compact group $G_S$ and hence is
finite. In this case the mass $\Mass(G,U)$ associated to 
$(G,U)$ is defined for any open compact subgroup $U\subset G(\A^S)$. 

There are examples of groups 
$G$ with finite $S$-arithmetic subgroups whose $S$-component 
$G_S$ needs not to be compact. For example, let
$D$ be a definite quaternion algebra over $\Q$ (with $S=\infty$) 
and $G:=D^\times$ be
the multiplicative group of $D$. 
Then the group $G(\R)=\H^\times$ of $\R$-points, which is 
the group of units in the Hamilton
quaternion algebra, is not compact. However, any arithmetic subgroup
of $G(\Q)$ is finite. Another example is the multiplicative 
group $G$ associated to a
definite central division algebra $D$ over a function field $K$ with $|S|=1$. 
 
Note that if $G_S:=G(\A_S)$ is compact, then the group $G(K)$ is
identified with a discrete subgroup in $G(\A^S)$ through the diagonal
embedding and the quotient
topological space $G(K)\backslash G(\A^S)$ is compact.
This space provides a fertile ground for studying harmonic
analysis. 
Slightly more general, 
one has the following equivalent statements which characterize
the groups with finite $S$-arithmetic subgroups:

\begin{prop} The following statements are equivalent.
  \begin{enumerate}
  \item Any $S$-arithmetic subgroup of $G(K)$ is finite.
  \item The group $G(K)$ is discretely embedded into the locally compact
    topological group $G(\A^S)$. 
  \item The group $G(K)$ is discretely embedded into the locally compact
    topological group $G(\A^S)$ and the quotient topological 
    space $G(K)\backslash
    G(\A^S)$ is compact. 
  \end{enumerate}  
\end{prop}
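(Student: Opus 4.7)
The strategy is to handle the three pairwise implications, noting that $(3)\Rightarrow(2)$ is immediate from the definition. For $(1)\Leftrightarrow(2)$ I rely on a direct topological argument in the locally compact Hausdorff group $G(\A^S)$, while for $(2)\Rightarrow(3)$ I invoke the Borel--Harish-Chandra finiteness of class numbers already recalled in Section~\ref{sec:21}.

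For $(1)\Rightarrow(2)$, fix any open compact subgroup $U\subset G(\A^S)$; by hypothesis the $S$-arithmetic subgroup $\Gamma_U:=G(K)\cap U$ is finite, say $\Gamma_U=\{1,g_2,\ldots,g_k\}$. Using that $G(\A^S)$ is Hausdorff, one shrinks $U$ to an open neighborhood $V$ of $1$ disjoint from each $g_i$ with $i\ge 2$, so that $V\cap G(K)=\{1\}$ and $G(K)$ is discrete. For the converse $(2)\Rightarrow(1)$, a discrete subgroup of a Hausdorff topological group is automatically closed, so $G(K)\cap U$ is a closed discrete subset of the compact set $U$, and is therefore finite; any $S$-arithmetic subgroup is commensurable with some such $\Gamma_U$, and commensurability preserves finiteness as already observed in Section~\ref{sec:21}.

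For $(2)\Rightarrow(3)$, discreteness of $G(K)$ makes the quotient $G(K)\backslash G(\A^S)$ Hausdorff. To establish compactness, choose any open compact $U\subset G(\A^S)$; by the Borel--Harish-Chandra theorem there exists a finite set of representatives $c_1,\ldots,c_h$ for $\DS(G,U)$, giving
\[
G(\A^S)=\bigcup_{i=1}^{h} G(K)\,c_i\,U.
\]
Passing to the $G(K)$-quotient expresses $G(K)\backslash G(\A^S)$ as a finite union of continuous images of the compact sets $c_iU$ under the quotient map, hence as a compact space.

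The only genuinely non-elementary ingredient is the Borel--Harish-Chandra finiteness theorem, so once it is granted no step presents a real obstacle; the proposition is essentially a packaging of that deep result together with standard topological facts about discrete subgroups of Hausdorff locally compact groups. The only subtlety one must attend to is the distinction between the a priori algebraic finiteness condition on commensurability classes in (1) and the topological discreteness condition in (2), which is precisely what the $(1)\Rightarrow(2)$ argument bridges via Hausdorff separation inside a chosen compact open neighborhood.
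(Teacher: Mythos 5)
Your argument is correct, and it is worth noting at the outset that the paper does not actually prove this proposition: it simply refers the reader to Gross \cite{gross:amf}, so there is no in-paper argument to match yours against. Your write-up is a legitimate self-contained proof. The implications $(1)\Leftrightarrow(2)$ are handled exactly as one would hope: separating the identity from the finitely many other elements of $G(K)\cap U$ inside the Hausdorff group gives discreteness, and conversely the standard facts that a discrete subgroup of a Hausdorff group is closed and that a closed discrete subset of the compact set $U$ is finite, together with the commensurability remark already made in Section~\ref{sec:21}, give the converse. The only genuinely deep input is the Borel--Harish-Chandra finiteness of $\DS(G,U)$, which the paper itself invokes in Section~\ref{sec:21} (and which, in the function field case, requires the corresponding finiteness theorem for reductive groups over global function fields), so you are entitled to it.

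One observation you might make explicit, since it clarifies the logical weight of the three conditions: your covering argument $G(\A^S)=\bigcup_{i=1}^h G(K)c_iU$ shows that $G(K)\backslash G(\A^S)$ is \emph{always} quasi-compact for reductive $G$, with no use of hypothesis (2). Hence the entire content of $(2)\Rightarrow(3)$ is the Hausdorff separation of the quotient, which you correctly reduce to the closedness of the discrete subgroup $G(K)$. This is not a gap --- it just means (3) adds nothing to (2) beyond Hausdorffness --- but stating it would make the structure of the proof more transparent and would reassure the reader that the unconditional quasi-compactness is not a sign of an error.
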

\begin{proof}
  See a proof in Gross \cite{gross:amf}. \qed
\end{proof}

In general, it is very difficult to calculate the class number
$|\DS(G,U)|$ explicitly. 
The mass $\Mass(G,U)$ associated to $(G,U)$, by its definition, is a
weighted class number. It is weighted according to the extra
symmetries of each double coset. The mass is easier to compute and it
provides a good lower bound for the class number. 
On the other hand, one can interpret  
$\Mass(G,U)$ as the volume of a fundamental domain.

\begin{lemma}\label{22}
  Let $G$ be a reductive group over $K$ with finite $S$-arithmetic
  subgroups. Then $\Mass(G,U)=\vol(G(K)\backslash G(\A^S))
  \vol(U)^{-1}$ for any Haar measure on $G(\A^S)$ and the counting
  measure  for the discrete subgroup $G(K)$. In particular if the Haar
  measure is chosen so that $\vol(U)=1$, then  
  $\Mass(G,U)=\vol(G(K)\backslash G(\A^S))$.
\end{lemma}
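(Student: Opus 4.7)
The plan is to start from the finite double coset decomposition
$G(\A^S)=\bigsqcup_{i=1}^h G(K)\,c_i\,U$
guaranteed by the finiteness of class numbers, and to compute the volume of the quotient $G(K)\backslash G(\A^S)$ piece by piece. Passing to the left quotient by $G(K)$, we get
\[
G(K)\backslash G(\A^S)=\bigsqcup_{i=1}^h G(K)\backslash G(K)\,c_i\,U,
\]
so the whole computation reduces to computing the volume of each single-orbit piece $G(K)\backslash G(K)\,c_i\,U$ with respect to the push-forward of the chosen Haar measure on $G(\A^S)$.

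Next, I would identify each such piece with a quotient of $U$ by the finite group $\Gamma_{c_i}=G(K)\cap c_iUc_i^{-1}$. The natural map $U\to G(K)\backslash G(K)\,c_i\,U$ sending $u\mapsto G(K)\,c_i\,u$ is continuous and surjective, and two elements $u,u'\in U$ have the same image iff $c_iu(c_iu')^{-1}\in G(K)$, i.e.\ iff there exists $\gamma\in\Gamma_{c_i}$ with $u=(c_i^{-1}\gamma c_i)\,u'$. Since $\Gamma_{c_i}$ is finite (by the hypothesis that $S$-arithmetic subgroups of $G$ are finite) and the left action of $c_i^{-1}\Gamma_{c_i}c_i$ on $U$ by left multiplication preserves the Haar measure, we obtain an identification of measured spaces
\[
G(K)\backslash G(K)\,c_i\,U \;\cong\; (c_i^{-1}\Gamma_{c_i}c_i)\backslash U,
\]
whose total volume is $\vol(U)/|\Gamma_{c_i}|$.

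Summing over $i$ yields
\[
\vol\bigl(G(K)\backslash G(\A^S)\bigr)=\sum_{i=1}^h \frac{\vol(U)}{|\Gamma_{c_i}|}=\vol(U)\cdot\Mass(G,U),
\]
which is the asserted formula; the ``in particular'' clause is then immediate upon normalizing $\vol(U)=1$. The only real point requiring care is the measure-theoretic identification in the second step, where one must check that the finite stabilizer $\Gamma_{c_i}$ acts freely enough (in fact properly discontinuously, trivially so since it is finite) so that the quotient volume is indeed $\vol(U)/|\Gamma_{c_i}|$; I do not expect a genuine obstacle here, since the compactness of $G(K)\backslash G(\A^S)$ (equivalently the finiteness of $\Gamma_{c_i}$) makes everything routine.
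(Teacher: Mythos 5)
Your proposal is correct and follows essentially the same route as the paper: decompose $G(\A^S)$ into the double cosets $G(K)c_iU$ and compute $\vol(G(K)\backslash G(K)c_iU)=\vol(U)/|\Gamma_{c_i}|$ before summing. The extra care you take in identifying each piece with $(c_i^{-1}\Gamma_{c_i}c_i)\backslash U$ is the step the paper leaves implicit, and it is handled correctly.
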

\begin{proof}
  Let $c_1,\dots, c_h$ be representatives for $\DS(G,U)$. One has 
\[ G(\A^S)=\coprod_{i=1}^h G(K)c_i U \]
and for each class
\[ \vol(G(K)\backslash G(K)c_i U)=\frac{\vol(U)}{\vol(G(K)\cap c_i
  U c_i^{-1})}=\vol(U) |\Gamma_{c_i}|^{-1}. \]
Then we get
\[ \vol(G(K)\backslash G(\A^S))=\sum_{i=1}^h \vol(G(K)\backslash
G(K)c_i U)=\vol(U)\cdot \Mass(G,U).\quad \text{\qed} \] 
\end{proof}
This interpretation of $\Mass(G,U)$ 
allows us to compare the masses $\Mass(G,U)$ and
$\Mass(G, U')$ for different open compact subgroups $U$ and $U'$ in
$G(\A^S)$. Indeed by Lemma~\ref{22} we have
\begin{equation}
  \label{eq:22}
  \Mass(G,U')=\Mass(G,U) [U:U'], 
\end{equation}
where the index $[U:U']$ is defined by 
\begin{equation}
  \label{eq:23}
  [U:U']:=[U:U''] [U':U'']^{-1}
\end{equation}
for any open compact subgroup $U''\subset U\cap U'$. 

\subsection{Mass of $(D,R)$}
\label{sec:23}
Let $D$ be a central central
algebra over $K$ which is definite relative to $S$. 
This means that the completion
$D_v$ at $v$, for any place $v\in S$, is a central
{\it division} algebra over $K_v$. In particular $D$ is a division
algebra.  
In the literature, definite central simple algebras are exactly 
those that do not satisfy the $S$-Eichler condition. 

Let $S_D\subset V^K$ denote the finite set of ramified places for
$D$. When $D$ is a quaternion algebra, the definite condition for $D$ 
simply means that $S\subset S_D$. However, the condition
$S\subset S_D$ is not sufficient to conclude that $D$ is definite in
general. One also needs to know the
invariants of $D$.

Let $R$ be an $A$-order in $D$.  Two right
$R$-ideals $I$ and $I'$ are said to be {\it equivalent}, which we
denote by $I_1\sim I_2$, if there is an element $g\in
D^\times$ such that $I'=gI$. In other words, $I_1\sim I_2$ if and only
if $I$ and $I'$ are isomorphic as right $R$-modules. Let $\Cl (R)$
denote the set of 
equivalence classes  of locally free right $R$-ideals.
It is well known that the set $\Cl
(R)$ is always finite, and that this set 
can be parametrized by an adelic class space:
\[ \Cl(R)\simeq D^\times \backslash D^\times_{\A^S}/\wh R^\times, \]
where $\wh R=\prod_{v\not \in S} R_v$ ($R_v=R\otimes_A A_v$) 
is the profinite completion of $R$ and
$D_{\A^S}=D\otimes_K \A^S$ is the attached prime-to-$S$ adele ring of
$D$. 

Let $I_1,\dots, I_h$ be representatives for the ideal classes in
$\Cl(R)$. Let $R_i$ be the left order of $I_i$. Then
$[R_i^\times:A^\times]$ is finite. This follows from the Dirichlet
theorem that $A^\times$ is finitely generated $\Z$-module of rank
$|S|-1$ and the following exact sequence:
\[ 1\to R_{i,1}^\times/A^\times_1 \to R_i^\times/A^\times \to
\Nr(R_i^\times)/\Nr(A^\times)\to 1,\]
where $\Nr:D^\times \to K^\times$ is the reduced norm,
$R_{i,1}^\times=R^\times_{i}\cap \ker \Nr$ and $A^\times_1:=A^\times
\cap \ker \Nr$. 
Note that the abelian groups $\Nr(A^\times)=(A^\times)^{\deg(D/K)}$ and
$\Nr(R_i^\times)$ are subgroups of finite index in
$A^\times$. Therefore, the quotient group $\Nr(R_i^\times)/\Nr(A^\times)$
is a finite abelian group.  
As the group $R_{i,1}^\times/A^\times_1$ is finite, one concludes
that $R_i^\times/A^\times$ is also finite.  
Define the mass $\Mass(D,R)$ by
\begin{equation}
  \label{eq:24}
  \Mass(D,R):=\sum_{i=1}^h\, [R_i^\times:A^\times]^{-1}.
\end{equation}
The definition is independent of the choice of 
the representatives $I_i$. 

When $|S|=1$, the group $G(K)=D^\times$ 
has finite $S$-arithmetic subgroups and
hence the mass $\Mass(G,U)$ is also defined, where $U:=\wh R^\times$. 
In this case put
\begin{equation}
  \label{eq:2.5}
  \Mass^{\rm u}(D,R):=\Mass(G,U)=\sum_{i=1}^h\, |R_i^\times|^{-1},
\end{equation}
which is an un-normalized version for $\Mass(D,R)$. 
Clearly we have $\Mass(D,R)=|A^\times|\cdot\Mass^{\rm u}(D,R)$.\\




\section{Comparison of masses}
\label{sec:03}

In the rest of this paper we let $K$, $S$, $A$, $D$ and $R$ be as in
Section~\ref{sec:01} (or \ref{sec:23}), except in Section~\ref{sec:41} 
where $A$ denotes an arbitrary Dedekind domain. 

\subsection{Notation}
\label{sec:31}

Let
$G=D^\times$ be the multiplicative group of $D$, viewed as an
algebraic group over $K$. Let $Z$ be the center of $G$ and $G^{\ad}=G/Z$
be the adjoint group of $G$. We have a short exact sequence of
algebraic groups over $K$:
\begin{equation}
  \label{eq:31}
  \begin{CD}
  1 @>>> Z @>>> G @>{\pr} >> G^\ad @>>> 1
\end{CD},
\end{equation}
where $\pr$ is the natural projection morphism. Let $\Gm$ denote the
multiplicative group over $K$, and $\Nr:G\to \Gm$ be the morphism
induced from the reduced norm map $\Nr:D^\times \to K^\times$. Let
$G_1:=\ker \Nr\subset G$ be the reduced norm one subgroup.  We have a
short exact sequence of 
algebraic groups over $K$:
\begin{equation}
  \label{eq:32}
  \begin{CD}
  1 @>>> G_1 @>>> G @>{\Nr} >> \Gm @>>> 1
\end{CD}.
\end{equation}
The group $G_1$ is an inner form of $\SL_n$ and hence is semi-simple and
simply connected. 

Applying Galois cohomology to (\ref{eq:31}) and using 
Hilbert Theorem 90, we have 
\[ G^\ad(K_v)=G(K_v)/K_v^\times\quad \text{and\ \ }
G^\ad(K)=D^\times/K^\times \]
and that $\pr: G(K_v)\to G^\ad(K_v)$ (resp: $\pr: G(K_v)\to G^\ad(K_v)$) is
a natural surjective map. 
When $v$ is an unramifield place for $D$, we have
$G^\ad(K_v)=\GL_n(K_v)/K_v^\times$. It is not hard to show that 
any maximal open compact subgroup
is conjugate to $\GL_n(O_v)K_v^\times/ K_v^\times$, for example 
using the Cartan decomposition. It follows that
$\pr(G(O_v))$ is a maximal open compact subgroup for almost all places
$v$, and hence that the map $\pr: G(\A^S)\to G^\ad(\A^S)$ is surjective and
open in the adelic topology.  

For any open compact subgroup $U\subset G(\A^S)$, we write $U^\ad$ for
the image $\pr(U)$ of $U$ in $G^\ad(\A^S)$, 
which is an open and compact subgroup. 
Note that
$G^\ad(K_v)=D_v^\times /K^\times_v$ is compact for all $v\in S$ as
$D_v$ is a division algebra and $D^\times_v\simeq \Z\times
O_{D_v}^\times$ (unit group of the unique maximal order). It follows
that the group $G^\ad$ has finite $S$-arithmetic subgroups and 
that $\Mass(G^\ad, U^\ad)$ is defined.



\subsection{Compare $\Mass(D,R)$ and $\Mass(G^\ad, U^\ad)$}
\label{sec:32}
We now take $U=\wh R^{\times}$ and want to compare the mass $\Mass(D,R)$ with
the mass $\Mass(G^\ad, U^\ad)$, where $U^\ad=\pr(U)$.

The projection map $\pr:G(\A^S)\to G^\ad(\A^S)$ gives rise to a surjective map 
$\pr: \DS(G,U)\to \DS(G^\ad, U^\ad)$. Moreover it induces a canonical bijection
 \begin{equation}   \label{eq:33}
   D^\times \backslash G(\A^S)/
   \A^{S,\times} \wh R^\times\simeq \DS(G^\ad, U^\ad). 
 \end{equation} 


Let $\Pic(A)=\A^{S,\times}/K^\times \wh A^\times$ denote the Picard group
of $A$ and let $h_A=|\Pic(A)|$ denote the class number of $A$. 
The group $\Pic(A)$ acts on $\DS(G,U)$ by $[a]\cdot [c]=[ca]$ for $a\in
\A^{S,\times}$ and $c\in G(\A^S)$, where $[a]$ is the class of $a\in
\A^{S,\times}$ in $\Pic(A)$ and $[c]$ is the class in $\DS(G,U)$. 
One has the induced bijection 
\begin{equation}
  \label{eq:335}
  \pr: \DS(G,U)/\Pic(A) \isoto \DS(G^\ad, U^\ad).
\end{equation}
For $c\in G(\A^S)$, write $[c]^\ad$ for the class $D^\times c \A^{S,\times}
\wh R^\times$ and regard it as an element in $\DS(G^\ad,
U^\ad)$ through the canonical isomorphism in (\ref{eq:33}). 

By definition, we have
\[ \Mass(G^\ad,U^\ad)=\sum_{[c]^\ad\in \DS(G^\ad, U^\ad)}
|\Gamma_{c}^\ad|^{-1},  \]
where $\Gamma_{c}^\ad=G^\ad(K)\cap \pr(c)U^\ad \pr(c)^{-1}$.
We have
\begin{equation}
  \label{eq:34}
  \Gamma_{c}^\ad=(D^\times\cap c\wh R^\times c^{-1}
  \A^{S,\times})/K^\times. 
\end{equation}
This group contains $(D^\times\cap c\wh R^\times
c^{-1} K^\times)/K^\times=R_c^\times/A^\times$ as a subgroup, where 
$R_c=D\cap c\wh R c^{-1}$, which is also the left order of the ideal
class corresponding to the class $[c]$. 
Therefore, the contribution of the class
$[c]^\ad$ in $\Mass(G^\ad, U^\ad)$ is equal to
\begin{equation}
  \label{eq:35}
  |\Gamma_{c}^\ad|^{-1}=|R_c^\times/A^\times|^{-1} |(D^\times\cap c\wh
   R^\times c^{-1} \A^{S,\times})/(D^\times \cap K^\times c\wh
   R^\times c^{-1})|^{-1}.  
\end{equation}

On the group $G$, we have
\[ \pr^{-1}([c]^\ad)=\{[ac]; a\in \A^{S,\times}\}\simeq \Pic(A)/\Stab([c]), \]
where $\Stab([c])$ is the stabilizer of the class $[c]$ under the
$\Pic(A)$-action, and 
\[ R_{ac}^\times=\Gamma_{ac}=D^\times \cap (ac)\wh R^\times
(ac)^{-1}=\Gamma_c=R^\times_c.\]
This says that every member in the fiber $\pr^{-1}([c]^\ad)$ has the
same weight. 
Thus,  the weight sum over the fiber
$\pr^{-1}([c]^\ad)$ in $\Mass(D,R)$ is  
\begin{equation}
  \label{eq:36}
  \sum_{[c']\in \pr^{-1}([c]^\ad)} |R_{c'}^\times/A^\times|^{-1}=
 |R_c^\times/A^\times|^{-1} \frac{h_A}{|\Stab([c])|}.
\end{equation}
 It is easy to see 
\[ [ac]=[c]\iff D^\times ac \wh R^\times=D^\times c \wh R^\times \iff
a\in \A^{S,\times}\cap D^\times c\wh R^\times c^{-1}, \]
and we get
\begin{equation}
  \label{eq:37}
  \Stab([c])= (\A^{S,\times}\cap D^\times c\wh R^\times
c^{-1})/K^\times \wh A^\times.
\end{equation}
 
We now show 

\begin{lemma}\label{31}
  There is an isomorphism of finite abelian groups  
  \begin{equation}
    \label{eq:38}
    \Stab([c])\simeq (D^\times\cap \A^{S,\times} c\wh
   R^\times c^{-1} )/(D^\times \cap K^\times c\wh R^\times c^{-1}).
  \end{equation} 
\end{lemma}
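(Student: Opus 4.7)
My plan is to construct an explicit isomorphism between the two groups in \eqref{eq:38}, starting from the description $\Stab([c])=(\A^{S,\times}\cap D^\times c\wh R^\times c^{-1})/K^\times\wh A^\times$ recorded in \eqref{eq:37}. The one genuine input will be the following \emph{centrality lemma}:
$$\A^{S,\times}\cap c\wh R^\times c^{-1}=\wh A^\times.$$
Its justification is brief: $\A^{S,\times}$ is the center of $G(\A^S)=\wh D^\times$, so it is fixed under conjugation by $c$, reducing the claim to $\A^{S,\times}\cap\wh R^\times=\wh A^\times$; this in turn is local, and at each $v\notin S$ the order $R_v$ meets $K_v$ in exactly $A_v$ because $R_v$ is integral over the integrally closed DVR $A_v$.

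With the centrality lemma at hand, I would define a map
$$\phi: \A^{S,\times}\cap D^\times c\wh R^\times c^{-1}\longrightarrow (D^\times\cap \A^{S,\times} c\wh R^\times c^{-1})/(D^\times \cap K^\times c\wh R^\times c^{-1})$$
by choosing, for each source element $a$, a decomposition $a=du$ with $d\in D^\times$ and $u\in c\wh R^\times c^{-1}$, and setting $\phi(a):=[d]$. The element $d=au^{-1}$ automatically belongs to $D^\times\cap\A^{S,\times}c\wh R^\times c^{-1}$. Independence of the decomposition reduces at once to the centrality lemma: a second decomposition $a=d'u'$ yields $(d')^{-1}d=u'u^{-1}\in D^\times\cap c\wh R^\times c^{-1}\subset D^\times\cap K^\times c\wh R^\times c^{-1}$. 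Similarly $\phi$ kills $K^\times\wh A^\times$, since for $a=kb$ with $k\in K^\times$ and $b\in\wh A^\times$ one may take $d=k$, $u=b$ (using $\wh A^\times\subset c\wh R^\times c^{-1}$, which is again a consequence of centrality).

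For bijectivity of the induced map $\bar\phi$ on $\Stab([c])$, surjectivity comes from reversing the construction: given $d$ in the target, write $d=av$ with $a\in\A^{S,\times}$ and $v\in c\wh R^\times c^{-1}$, and verify $a\in\A^{S,\times}\cap D^\times c\wh R^\times c^{-1}$ satisfies $\phi(a)=[d]$. Injectivity uses the centrality lemma one more time: if $\phi(a)=[1]$ so that $d=kw$ with $k\in K^\times$ and $w\in c\wh R^\times c^{-1}$, then $k^{-1}a=wu\in\A^{S,\times}\cap c\wh R^\times c^{-1}=\wh A^\times$, forcing $a\in K^\times\wh A^\times$. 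I expect no serious obstacle: the centrality lemma is the only ingredient with content, and the remainder is just formal coset manipulation.
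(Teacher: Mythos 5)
Your proof is correct and follows essentially the same route as the paper: the same map $a\mapsto[d]$ for a decomposition $a=du$ with $u\in c\wh R^\times c^{-1}$ (the paper phrases it as $a\mapsto[aW\cap D^\times]$ with $W=c\wh R^\times c^{-1}$), the same well-definedness check via $D^\times\cap W\subset D^\times\cap K^\times W$, and the same kernel computation resting on $\A^{S,\times}\cap W=\wh A^\times$. Your only addition is that you spell out the proof of this last identity (centrality of $\A^{S,\times}$ plus $R_v\cap K_v=A_v$), which the paper uses without comment.
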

\begin{proof}
  To simply notation, put $W:=c\wh R^\times c^{-1}$.
  First of all for $a\in \A^{S,\times}$ we have
\[ a W \cap D^\times\neq \emptyset\iff a\in
   \A^{S,\times}\cap D^\times W. \]
We now show that for each $a\in
   \A^{S,\times}\cap D^\times W$, the intersection $a W \cap D^\times$
   defines an element in $(\A^{S,\times} W\cap D^\times)/(K^\times
   W\cap D^\times)$. Suppose we have two elements $ax_1=d_1$,
   $ax_2=d_2$, where $x_1,x_2\in W$ and $d_1,d_2\in D^\times$. Then 
\[ (ax_1)^{-1} (ax_2)=x^{-1}_1 x_2=d_1^{-1} d_2 \in W\cap D^\times
\subset K^\times W\cap D^\times. \]
Therefore, we define a map
\[ \A^{S,\times}\cap D^\times W \to (\A^{S,\times} W \cap
D^{\times})/(K^\times W \cap D^\times), \quad a \mapsto 
[aW\cap D^\times]. \]
We need to show that elements which go to the identity class lie in
$K^\times \wh A^\times$. Suppose an element $ax\in aW\cap D^\times$ 
lies in the identity class, 
i.e. $ax=k y$ for some $k\in K^\times$ and $y\in W$. Then the element
$ak^{-1}= yx^{-1}$ lies in $\A^{S,\times}\cap W=\wh A^\times$. This
shows that $a\in K^\times \wh A^\times$. Therefore, the above map
induces a bijection
\[ (\A^{S,\times}\cap D^\times W)/K^\times \wh A^\times \simeq 
(\A^{S,\times} W \cap D^{\times})/(K^\times W \cap D^\times). \]
Moreover, this is an isomorphism of finite abelian groups. Combining
with the isomorphism (\ref{eq:37}), 
one obtains an isomorphism (\ref{eq:38}). \qed
\end{proof}

\begin{thm}\label{32}
  We have the
  equality
  \begin{equation}
    \label{eq:39}
    \Mass(D,R)=h_A\cdot \Mass(G^\ad, U^\ad). 
  \end{equation}
\end{thm}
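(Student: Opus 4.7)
The plan is to evaluate $\Mass(D,R)$ by grouping the ideal classes according to their images under the surjection $\pr:\DS(G,U)\to\DS(G^\ad,U^\ad)$, and then to identify each fiber-sum with $h_A$ times the contribution of the corresponding class to $\Mass(G^\ad,U^\ad)$. All of the necessary ingredients have already been assembled above: the bijection \eqref{eq:335}, the fiber-weight computation \eqref{eq:36}, the formula \eqref{eq:35} for $|\Gamma_c^\ad|^{-1}$, and the identification of $\Stab([c])$ given by Lemma~\ref{31}. Thus the proof is essentially a careful bookkeeping that combines these formulas.

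First, I would write
\[
\Mass(D,R)=\sum_{[c]^\ad\in\DS(G^\ad,U^\ad)}\ \sum_{[c']\in \pr^{-1}([c]^\ad)}[R_{c'}^\times:A^\times]^{-1},
\]
which is a rearrangement of the defining sum \eqref{eq:24} along the fibers of $\pr$. By \eqref{eq:36}, the inner sum is equal to
\[
|R_c^\times/A^\times|^{-1}\,\frac{h_A}{|\Stab([c])|}.
\]

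Second, I would compare this fiber contribution with the corresponding term $|\Gamma_c^\ad|^{-1}$ in $\Mass(G^\ad,U^\ad)$. Note that $\A^{S,\times}$ is central in $G(\A^S)$, so the two sets appearing in \eqref{eq:35} and in Lemma~\ref{31} coincide. Thus \eqref{eq:35} reads
\[
|\Gamma_c^\ad|=|R_c^\times/A^\times|\cdot\bigl|(D^\times\cap \A^{S,\times}c\wh R^\times c^{-1})/(D^\times\cap K^\times c\wh R^\times c^{-1})\bigr|,
\]
and by Lemma~\ref{31} the second factor is precisely $|\Stab([c])|$. Hence
\[
|\Gamma_c^\ad|=|R_c^\times/A^\times|\cdot|\Stab([c])|,
\]
so the fiber contribution to $\Mass(D,R)$ becomes $h_A\cdot|\Gamma_c^\ad|^{-1}$.

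Finally, summing over all classes $[c]^\ad\in\DS(G^\ad,U^\ad)$ yields
\[
\Mass(D,R)=h_A\sum_{[c]^\ad}|\Gamma_c^\ad|^{-1}=h_A\cdot\Mass(G^\ad,U^\ad),
\]
which is the claimed identity. The only substantive input is the stabilizer identification in Lemma~\ref{31}; given that, the theorem is a formal consequence of regrouping the sum and cancelling a common factor. No delicate step remains, so I expect no significant obstacle in carrying out this argument in detail.
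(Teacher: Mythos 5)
Your proposal is correct and follows essentially the same route as the paper's own proof: regroup the sum defining $\Mass(D,R)$ along the fibers of $\pr:\DS(G,U)\to\DS(G^\ad,U^\ad)$, evaluate each fiber via (\ref{eq:36}), and identify $|\Gamma_c^\ad|=|R_c^\times/A^\times|\cdot|\Stab([c])|$ using (\ref{eq:35}) and Lemma~\ref{31}. No discrepancies to report.
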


\begin{proof}
It follows from (\ref{eq:35}) and Lemma~\ref{31} that
\[  |\Gamma_c^\ad|^{-1}=|R_c^\times/A^\times|^{-1}
|\Stab([c])|^{-1}. \]
By (\ref{eq:36})
we have 
\begin{equation*}
    \Mass(D,R) =\sum_{[c]^\ad} \ \sum_{[c']\in
  \pr^{-1}([c]^\ad)} |R^\times_{c'}/A^\times|^{-1}  
    =\sum_{[c]^\ad}
  |R^\times_{c}/A^\times|^{-1} \frac{h_A}{|\Stab([c])|}, 
\end{equation*}
where $[c]^\ad$ runs over all double cosets in $\DS(G^\ad,U^\ad)$.
Thus, 
\[ \Mass(D,R) =\sum_{[c]^\ad}
  h_A |\Gamma_c^\ad|^{-1}=h_A \cdot\Mass(G^\ad,U^\ad. \quad \text{\qed} \]
\end{proof}


\begin{cor}\label{33}
  If $R$ and $R'$ are two $A$-orders in $D$, then we have
  \begin{equation}
    \label{eq:311}
    \Mass(D,R)=\Mass(D,R') [\wh R'^\times: \wh R^\times],
  \end{equation}
  where the index $[\wh R'^\times: \wh R^\times]$ is defined in
  {\rm (\ref{eq:23})}. 
\end{cor}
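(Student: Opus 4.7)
The plan is to reduce the corollary to the volume-comparison identity~(\ref{eq:22}) applied to $G^\ad$, combined with Theorem~\ref{32}. Applying Theorem~\ref{32} to both $R$ and $R'$ gives
\[ \Mass(D,R)=h_A\cdot \Mass(G^\ad,U^\ad), \qquad \Mass(D,R')=h_A\cdot \Mass(G^\ad,U'^\ad), \]
where $U:=\wh R^\times$, $U':=\wh R'^\times$, and $U^\ad$, $U'^\ad$ denote their respective images under $\pr\colon G(\A^S)\to G^\ad(\A^S)$. Invoking~(\ref{eq:22}) for $G^\ad$ then yields $\Mass(G^\ad,U^\ad)=\Mass(G^\ad,U'^\ad)\cdot [U'^\ad:U^\ad]$, so the corollary is equivalent to the identity of indices $[U'^\ad:U^\ad]=[\wh R'^\times:\wh R^\times]$.

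To prove this identity I would exploit the short exact sequences induced by $\pr$. Because any $A$-order in $D$ contains $A$, at each place $v\notin S$ we have $A_v^\times \subseteq R_v^\times$ and $A_v^\times \subseteq R_v'^\times$; hence $\wh A^\times \subseteq U\cap U'$. Since the centre of $G$ is $\Gm$, we have $Z(\A^S)=\A^{S,\times}$, and a straightforward local check gives $\wh R^\times\cap \A^{S,\times}=\wh A^\times$ (and similarly for $R'$), using only that an element of $K_v^\times$ lying in $R_v^\times$ must lie in $A_v^\times$. Thus $\wh A^\times$ is precisely the kernel of $\pr$ restricted to $U$ and to $U'$, giving canonical identifications $U^\ad=U/\wh A^\times$ and $U'^\ad=U'/\wh A^\times$.

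Choose now any open compact subgroup $V\subseteq U\cap U'$, for instance $V:=U\cap U'$; then $V$ still contains $\wh A^\times$, so $V^\ad=V/\wh A^\times$, and a direct count gives $[U:V]=[U^\ad:V^\ad]$ and $[U':V]=[U'^\ad:V^\ad]$. By the definition~(\ref{eq:23}) of the generalised index,
\[ [U'^\ad:U^\ad]=\frac{[U'^\ad:V^\ad]}{[U^\ad:V^\ad]}=\frac{[U':V]}{[U:V]}=[U':U]=[\wh R'^\times:\wh R^\times], \]
which combined with the reductions above yields the claim.

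The argument is entirely formal once Theorem~\ref{32} is available; the only step requiring any verification is the identification $\wh R^\times\cap \A^{S,\times}=\wh A^\times$, which is where the hypothesis that $R$ is an $A$-order (and therefore contains the centre $A$) enters. I do not expect any substantive obstacle.
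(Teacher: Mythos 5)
Your proposal is correct and follows essentially the same route as the paper: apply Theorem~\ref{32} to both orders, use the volume identity~(\ref{eq:22}) for $G^\ad$, and observe that since both $\wh R^\times$ and $\wh R'^\times$ contain the centre $\wh A^\times$ (which is exactly the kernel of $\pr$ on each), the generalised indices $[U'^\ad:U^\ad]$ and $[\wh R'^\times:\wh R^\times]$ agree. Your verification that $\wh R^\times\cap\A^{S,\times}=\wh A^\times$ is a detail the paper leaves implicit, but the argument is the same.
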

\begin{proof}
  Since both the groups $\wh R'^\times$ and $\wh R^\times$ contain
  the center $\wh A^\times$, one has 
  \[ [\wh R'^\times: \wh R^\times]=[U'^\ad: U^\ad], \] 
  where $U'^\ad=\pr(\wh R'^\times)$ and $U^\ad=\pr(\wh
  R^\times)$. As 
\[ \Mass(G^\ad, U'^\ad)=\Mass(G^\ad,
  U^\ad)[U'^\ad:U^\ad], \]
the assertion follows immediately from
Theorem~\ref{32}.  \qed
\end{proof}

\begin{remark}\label{34} \
  (1) When the class number $h_A$ of $A$ is one, the induced map 
  $\pr:\DS(G,U)\to \DS(G^\ad, U^\ad)$ below
  (\ref{eq:33}) is bijective. In this case the equality
  $\Mass(D,R)=\Mass(G^\ad, U^\ad)$ of different 
  masses in Theorem~\ref{32} 
  is the term-by-term equality.
  


  (2) The action of $\Pic(A)$ on the class space $\DS(G, U)\simeq \Cl(R)$ 
  needs not to be free in general. Therefore, the class number 
  $h(R)=|\DS(G,U)|$ may not be equal to $h_A \cdot |\DS(G^\ad ,U^\ad)|$. 
  To see this,
  let us look at the isotropy subgroup of the identity class $[1]$
  ($c=1$ in (\ref{eq:37})):
\[ \Stab([1])\simeq (\A^{S,\times}\cap D^\times \wh R^\times)/K^\times
  \wh A^\times. \]
In the extreme case one considers the possibility of the equality 
\[ \A^{S,\times} \cap D^\times \wh R^\times=\A^{S,\times}. \]
This is possible if one can find a maximal subfield $L$ of $D$ over
$K$ which satisfies the {\it Principal Ideal Theorem} (cf.
Artin and Tate \cite[Chapter XIII, Section 4,
p.137--141]{artin-tate:cft}),  
that is, $\A^{S,\times}\subset
L^\times \wh B^\times$, where $B$ is the integral closure of $A$ in
$L$. Below is an example (provided by F.-T. Wei). 

(3) {\bf An example.} 
Let $K=\Q(\sqrt{10})$ and $L=K(\sqrt{-5})=\Q(\sqrt{-5},\sqrt{-2})$. 
Let $D$ be the quaternion algebra over $K$ which is ramified exactly at
the two real places of $K$. Since $L/K$ is inert at the real
places, we can embed $L$ into $D$ over $K$ by the Hasse principle
(cf. \cite[Section 18.4]{pierce}). 
Notice that the primes $2$ and $5$
are ramified in $K$. Let $\grp$ be the prime of $O_K=\Z[\sqrt{10}]$
lying over $5$. 

Claim: $\grp=\sqrt{10}\,O_K+5 O_K$ and $\grp$ is of order $2$ in
$\Pic(O_K)$. 

Proof of the claim: Let $\grq$ be the unique prime of $O_K$ lying over
$2$. Then $\sqrt{10}\, O_K=\grp \grq$ and $5O_K=\grp^2$. Therefore,
$\grp=\sqrt{10}\,O_K+5 O_K$, and $\grp^2=5O_K$ is principal. 
We now show that $\grp$ is not principal. Suppose that $\grp$ is
principal. Then there exist $x,y\in \Z$ such that $\Nr(x+y
\sqrt{10})=x^2-10 y^2=\pm 5$. Then $x=5x'$ for some $x'\in \Z$, and
$5x'^2-2y^2=\pm 1 \equiv \pm 1 \pmod 5$. This implies that
$-2y^2\equiv \pm 1 \pmod 5$, which is a contradiction. 
 
Moreover, we have 
\[ \grp O_L=\sqrt{10}\,O_L+5 O_L=\sqrt{-5}\, (\sqrt{-2}\,
O_L+\sqrt{-5}\, O_L)=\sqrt{-5}\, O_L, \] 
which is principal. Let $R$ be a
maximal order in $D$ which contains $O_L$. Then $\grp R=\sqrt {-5} R$.  
This shows that the isotropy subgroup of the identity class $[1]$ is
non-trivial, and particularly that 
the action of $\Pic(O_K)$ on $\Cl(R)$ is not free. 
As the class number of $O_K$ is equal to $2$,
we also show that the canonical map $\Pic(O_K)\to \Pic(O_L)$, 
sending any ideal class $[I]$ to $[I O_L]$, is the zero map. 

\end{remark}



\subsection{Comparison 
of $\Mass(G^\ad,U^\ad)$ and $\Mass(G_1,U_1)$} 
\label{sec:34}
Recall that $G_1$ is the norm-one subgroup of $G$ and $U_1:=U\cap
G_1(\A^S)$, where $U=\wh R^\times$. Let $\wt R$ be a maximal $A$-order
in $D$ containing $R$. Put $\wt U:=(\wt R\otimes_A \wh A)^\times$ and
$\wt U_1:=\wt U\cap G_1(\A^S)$. We compare the masses
$\Mass(G^\ad,U^\ad)$ and $\Mass(G_1,U_1)$. 
Using the interpretation of masses as
the volume of fundamental domains (Lemma~\ref{22}), one first has 
\begin{equation}
  \label{eq:3.13}
  \begin{split}
  \Mass(G_1,U_1) & = \Mass(G_1, \wt U_1)[\wt U_1:U_1],   \\
  \Mass(G^\ad,U^\ad) & =\Mass(G^\ad, \wt U^\ad)[\wt U^\ad:U^\ad].
  \end{split}
\end{equation}
From this we see that the comparison of these two masses depends on
$U$ and can be reduced to the case where $R$ is a maximal $A$-order. Put
\begin{equation}
  \label{eq:3.14}
  c(S,U):=\frac{\Mass(G^\ad,U^\ad)}{\Mass(G_1,U_1)}.
\end{equation}
\begin{lemma} \label{35}
One has
\begin{equation}
  \label{eq:3.15}
  c(S,U)=c(S,\wt U) \cdot [\wh A^\times: \Nr(U)],
\end{equation}
where $\Nr:G(\A^S)\to \A^{S,\times}$ is the reduced norm map. 


\end{lemma}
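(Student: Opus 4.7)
The plan is to manipulate the two identities in (\ref{eq:3.13}) and reduce the claim to a purely index-theoretic statement about open compact subgroups. Dividing the second line of (\ref{eq:3.13}) by the first and using the definition of $c(S,U)$, one obtains immediately
\[ c(S,U)=c(S,\wt U)\cdot \frac{[\wt U^\ad:U^\ad]}{[\wt U_1:U_1]}, \]
so the lemma reduces to showing the identity
\[ \frac{[\wt U^\ad:U^\ad]}{[\wt U_1:U_1]}=[\wh A^\times :\Nr(U)]. \]

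First I would simplify the numerator. Since $\ker\pr=Z$ has $S$-prime-adele points $\A^{S,\times}$, and since both $R$ and $\wt R$ are $A$-orders (so that $U\cap\A^{S,\times}=\wt U\cap\A^{S,\times}=\wh A^\times$), one has $U^\ad\cong U/\wh A^\times$ and $\wt U^\ad\cong \wt U/\wh A^\times$. Consequently $[\wt U^\ad:U^\ad]=[\wt U:U]$.

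The key arithmetic input is then the local surjectivity of the reduced norm on units of a maximal order, which gives $\Nr(\wt U)=\wh A^\times$ (at unramified places this is the determinant surjectivity $\det\GL_n(A_v)=A_v^\times$, while at ramified places it is the standard fact that $\Nr(O_{D_v}^\times)=A_v^\times$ for the unique maximal order $O_{D_v}$). With this in hand, since $U_1=U\cap\wt U_1$, the natural map
\[ U\wt U_1/\wt U_1\stackrel{\sim}{\longrightarrow} U/U_1 \]
is an isomorphism, so $[U\wt U_1:\wt U_1]=[U:U_1]$, and by a standard index chase this yields $[U\wt U_1:U]=[\wt U_1:U_1]$. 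On the other hand, projecting under $\Nr$ gives
\[ \wt U/U\wt U_1\;\cong\;\Nr(\wt U)/\Nr(U\wt U_1)\;=\;\wh A^\times/\Nr(U), \]
so $[\wt U:U\wt U_1]=[\wh A^\times:\Nr(U)]$. Combining the two partial indices,
\[ [\wt U:U]=[\wt U:U\wt U_1]\cdot [U\wt U_1:U]=[\wh A^\times:\Nr(U)]\cdot[\wt U_1:U_1], \]
which is exactly the required identity.

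The main (and essentially only) obstacle is the local surjectivity $\Nr(\wt R_v^\times)=A_v^\times$ at every finite $v$; once this is granted the argument is a clean index chase. If one wished to avoid invoking surjectivity at ramified places directly, one could alternatively argue via the exact sequences
\[ 1\to U_1\to U\stackrel{\Nr}{\to}\Nr(U)\to 1,\qquad 1\to\wt U_1\to\wt U\stackrel{\Nr}{\to}\Nr(\wt U)\to 1, \]
and compare, but invoking the classical fact is cleaner.
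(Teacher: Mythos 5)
Your proposal is correct and follows essentially the same route as the paper: reduce via (\ref{eq:3.13}) to the ratio $[\wt U^\ad:U^\ad]/[\wt U_1:U_1]$, identify $[\wt U^\ad:U^\ad]=[\wt U:U]$ because both groups contain the center $\wh A^\times$, and then derive $[\wt U:U]=[\wt U_1:U_1]\cdot[\wh A^\times:\Nr(U)]$ from the two norm exact sequences. The only difference is that you spell out the index chase through $U\wt U_1$ and the surjectivity $\Nr(\wt U)=\wh A^\times$ explicitly, which the paper leaves as an ``one easily shows'' step.
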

\begin{proof}
  Using the relation (\ref{eq:3.13}) we get 
  \begin{equation}
    \label{eq:3.16}
     c(S,U)=c(S,\wt U)\cdot \frac{[\wt U^\ad: U^\ad]}{[\wt U_1:U_1]}.
  \end{equation}
Since both $U$ and $\wt U$ contain the center $\wh A^\times$, one
has $[\wt U^\ad: U^\ad]=[\wt U: U]$. 
Using the following short exact sequences
\[ 
\begin{CD}
  1 @>>> U_1 @>>> U @>>> \Nr(U) @>>> 1,  
\end{CD} \]
\[ \begin{CD}
  1 @>>> \wt U_1 @>>> \wt U @>>> \Nr(\wt U)=\wh A^\times @>>> 1,  
\end{CD} 
\]
one easily shows that $[\wt U:U]=[\wt U_1: U_1] \cdot [\wh A^\times:
\Nr(U)]$. Thus, $[\wt U^\ad: U^\ad]=[\wt U_1: U_1] \cdot [\wh A^\times:
\Nr(U)]$ and the lemma is proved. \qed
\end{proof}


Recall (Section~\ref{sec:23}) that $S_D\subset V^K$ denotes the finite set of 
ramified places for $D$.

\begin{thm} Assume that $S=\infty$ and
  that $R$ is a maximal $A$-order.  
\begin{enumerate}
\item If $K$ is a totally real number field, then  
\begin{equation}
    \label{eq:3.17}
    \Mass(D, R)=h_A\cdot \frac{(-1)^{[K:\Q]}}{2^{[K:\Q]-1}}\cdot
\zeta_K(-1)\cdot \prod_{v\in S_D\cap V^K_f} (q_v-1),
\end{equation}
\begin{equation}
    \label{eq:3.18}
    \Mass(G^\ad, U^\ad)=\frac{(-1)^{[K:\Q]}}{2^{[K:\Q]-1}}\cdot
     \zeta_K(-1)\cdot \prod_{v\in S_D\cap V^K_f} (q_v-1),
\end{equation}
and 
\begin{equation}
  \label{eq:3.19}
  \Mass(G_1, U_1)=\frac{(-1)^{[K:\Q]}}{2^{[K:\Q]}}\cdot 
\zeta_K(-1)\cdot \prod_{v\in S_D\cap V^K_f} (q_v-1).
\end{equation}
\item If $K$ is a global function field, then
\begin{equation}
    \label{eq:3.20}
    \Mass(D, R)=h_A 
    \cdot 
\prod_{i=1}^{n-1} \zeta_K(-i)\cdot \prod_{v\in S_D} \lambda_v,
\end{equation}
\begin{equation}
    \label{eq:3.21}
   \Mass(G^\ad, U^\ad)=
\prod_{i=1}^{n-1} \zeta_K(-i)\cdot \prod_{v\in S_D} \lambda_v, 
  \end{equation}
and 
\begin{equation}
  \label{eq:3.22}
  \Mass(G_1, U_1)= 
\prod_{i=1}^{n-1} \zeta_K(-i)\cdot \prod_{v\in S_D} \lambda_v.
\end{equation}
where 
\begin{equation}
  \label{eq:3.23}
  \lambda_v=\prod_{1\le i\le n-1, d_v\nmid i} (q_v^i-1)
\end{equation}
and $d_v$ is the index of $D_v:=D\otimes_K K_v$. 
\end{enumerate}
\end{thm}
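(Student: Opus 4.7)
The plan is to take the classical mass formula for $\Mass(D,R)$ at a maximal order $R$ with $S=\infty$ as the input and propagate it to the other two masses via the comparison results already established in this paper. In the number field case, the input is Eichler's mass formula \cite{eichler:crelle55} for a maximal order in a totally definite quaternion algebra over a totally real field, which after rewriting $2/(-2)^{[K:\Q]}$ as $(-1)^{[K:\Q]}/2^{[K:\Q]-1}$ becomes exactly (\ref{eq:3.17}). In the function field case with $S=\{\infty\}$, the input is the analogous formula of Denert and Van Geel \cite{denert-geel:classno88}, with the alternative proof in \cite{wei-yu:mass_division}; this gives (\ref{eq:3.20}).

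From this, the formulas (\ref{eq:3.18}) and (\ref{eq:3.21}) for $\Mass(G^\ad,U^\ad)$ follow at once by dividing by $h_A$, using Theorem~\ref{32}.

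For (\ref{eq:3.19}) and (\ref{eq:3.22}), the task is to compute the ratio $c(S,\wt U)=\Mass(G^\ad,\wt U^\ad)/\Mass(G_1,\wt U_1)$ for the maximal order $\wt R$. The starting observation is that $\Nr(\wt U)=\wh A^\times$: locally at each $v\notin S$ the reduced norm sends $\wt R_v^\times$ onto $A_v^\times$, whether $D_v$ splits (via the determinant on $\GL_n(A_v)$) or is a division algebra over $K_v$ (the standard surjectivity of $\Nr$ on units of the maximal order). Using this together with the two exact sequences
\[
1\to \wt U_1\to \wt U\to \wh A^\times\to 1,\qquad 1\to \wh A^\times\to \wt U\to \wt U^\ad\to 1,
\]
and the volume interpretation of Lemma~\ref{22}, one can write $c(S,\wt U)$ as a global volume ratio $\vol(G^\ad(K)\backslash G^\ad(\A^S))/\vol(G_1(K)\backslash G_1(\A^S))$ times an explicit local factor coming from $\wh A^\times$ and its image in $\wt U^\ad$. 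The global ratio is controlled by the isogeny $G_1\to G^\ad$ with kernel $\mu_n$ and the Tamagawa-number input that $\tau(G_1)=1$ (Kottwitz). Bookkeeping then yields $c(\infty,\wt U)=2$ in the totally real quaternion case, explaining the extra factor of $1/2$ between (\ref{eq:3.18}) and (\ref{eq:3.19}); and $c(\{\infty\},\wt U)=1$ in the function field case with $|S|=1$, explaining the equality of (\ref{eq:3.21}) and (\ref{eq:3.22}).

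The main technical obstacle is to fix compatible Haar-measure normalizations on $G(\A^S)$, $G_1(\A^S)$, $G^\ad(\A^S)$, and $\Gm(\A^S)$, so that the isogeny sequence and the reduced-norm sequence translate the comparison of masses into a finite-index computation. Once this bookkeeping is in place, the remaining arguments are routine applications of Lemma~\ref{22} and the surjectivity $\Nr(\wt U)=\wh A^\times$ noted above.
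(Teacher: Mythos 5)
Your treatment of four of the six formulas coincides with the paper's: \eqref{eq:3.17} and \eqref{eq:3.20} are taken as input from Eichler and from Denert--Van Geel/Wei--Yu respectively, and \eqref{eq:3.18}, \eqref{eq:3.21} then follow by dividing by $h_A$ via Theorem~\ref{32}. Where you diverge is on $\Mass(G_1,U_1)$. The paper does not compute the ratio $c(\infty,\wt U)$ at all at this stage: it cites \eqref{eq:3.19} directly from Eichler in the number field case, and in the function field case it cites the relation $\Mass(D,R)=h_A\cdot\Mass(G_1,U_1)$ from the earlier paper of Yu--Yu to get \eqref{eq:3.22}. The value of $c(S,\wt U)$ in \eqref{eq:3.29} is then \emph{deduced} from the theorem (the paper explicitly calls this derivation ``ad hoc''). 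You propose the reverse logical order: compute $c(\infty,\wt U)$ a priori via the isogeny $G_1\to G^\ad$, the surjectivity $\Nr(\wt U)=\wh A^\times$, and the Tamagawa number $\tau(G_1)=1$, and then deduce $\Mass(G_1,U_1)$. This is a legitimate and in some ways more satisfying route (it would make Corollary~\ref{38} non-circular in spirit), and your claimed constants $c(\infty,\wt U)=2$ (totally real quaternion case) and $c(\{\infty\},\wt U)=1$ (function field, $|S|=1$) agree with \eqref{eq:3.29}.

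The caveat is that the one genuinely new ingredient in your argument --- the ``bookkeeping'' that produces these constants --- is exactly where the content lies, and it is not routine. The map $G_1(\A^S)\to G^\ad(\A^S)$ is far from surjective (its cokernel is governed by $\A^{S,\times}$ modulo reduced norms and $n$-th powers), so the global volume ratio $\vol(G^\ad(K)\backslash G^\ad(\A^S))/\vol(G_1(K)\backslash G_1(\A^S))$ cannot be read off from $\tau(G_1)=1$ alone; you also need $\tau(G^\ad)=n$ (or an equivalent class-field-theoretic count), the local volume comparisons of $G^\ad(O_v)$ against $G_1(O_v)$ at all places, and the contribution of the places in $S$ where $\Nr(D_v^\times)$ is a proper subgroup (e.g.\ $\R_{>0}\subset\R^\times$ at real places, which is the source of the factor $2^{[K:\Q]}$ versus $2^{[K:\Q]-1}$). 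If you intend this as a proof rather than a plan, that computation must be carried out; as written it is asserted, not proved. Alternatively, you can simply cite Eichler's formula for $\Mass(G_1,U_1)$ as the paper does and dispense with the Tamagawa-number detour entirely.
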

\begin{proof}
  (1) The formulas for $\Mass(D,R)$ and $\Mass(G_1, U_1)$ are 
  due to Eichler \cite{eichler:crelle55}; also see \cite[Chapter
  V]{vigneras}.  
  The formula for $\Mass(G^\ad, U^\ad)$ 
      follows from Eichler's formula for $\Mass(D,R)$ and
      Theorem~\ref{32}.

  (2) The formula for $\Mass(D,R)$ is obtained by 
      Denert and Van Geel \cite{denert-geel:classno88} and also by Wei
      and the author \cite[Theorem 1.1]{wei-yu:mass_division}.  
      The formula for $\Mass(G_1,U_1)$ follows from the relation
      $\Mass(D,R)=h_A \cdot \Mass(G_1,U_1)$; see
      \cite[Eq. (3), p.~907]{yu-yu:ssd}
\footnote{In the function field case with $|S|=1$ the notation $\Mass(D,R)$ in
\cite{wei-yu:mass_division} is defined to be 
the un-normalized mass $\Mass^{\rm u}(D,R)$ (\ref{eq:2.5}) in
this paper, which is $(q-1)^{-1}$ times the mass 
$\Mass(D,R)$ in this paper.}. 
      The formula for $\Mass(G^\ad, U^\ad)$ follows from the formula
      for $\Mass(D,R)$ and Theorem~\ref{32}. \qed
\end{proof}

\begin{thm}\label{37}
  Assume that $R$ is a maximal
  $A$-order. We have 
  \begin{equation}
    \label{eq:3.24}
    \begin{split}
    \Mass(G^\ad, U^\ad)& =c^\ad \cdot
    \prod_{i=1}^{n-1}|\zeta_K(-i)| \cdot \prod_{v\in S_D\cap V^K_f} \lambda_v, \\
    \Mass(G_1, U_1) & =c_1\cdot \prod_{i=1}^{n-1}|\zeta_K(-i)| \cdot
    \prod_{v\in S_D\cap V^K_f} \lambda_v, 
    \end{split}
  \end{equation}
where $\lambda_v$ is given in (\ref{eq:3.23}), 
$c^\ad=1/n^{|S|-1}$ 
and 
\begin{equation}
  \label{eq:3.25}
c_1=
\begin{cases}
   1 & \text{if $K$ is a function field;} \\
   {2^{-[K:\Q]}}  & \text{if $K$ is a totally real number field.} 
\end{cases}  
\end{equation}
\end{thm}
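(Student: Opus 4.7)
The plan is to reduce the general $S$ case to the base case $S = \infty$ (resp.\ $S = \{\infty\}$ in the function field setting), which is precisely the preceding theorem, via the volume interpretation of the mass afforded by Lemma~\ref{22}.

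Since $\infty \subset S$, one has the decomposition $G^\ad(\A^\infty) = G^\ad(\A^S) \times \prod_{v \in S \setminus \infty} G^\ad(K_v)$, where each factor $G^\ad(K_v)$ on the right is compact, as noted in Section~\ref{sec:31}. Since $G^\ad(K)$ embeds discretely and acts freely by left translation on both $G^\ad(\A^\infty)$ and $G^\ad(\A^S)$, the natural projection
\[
G^\ad(K) \backslash G^\ad(\A^\infty) \longrightarrow G^\ad(K) \backslash G^\ad(\A^S)
\]
is a trivial bundle with fiber $\prod_{v \in S \setminus \infty} G^\ad(K_v)$. Combining the resulting volume identity with the analogous product decomposition of $\vol(U^{\ad,\infty})$ yields
\[
\Mass(G^\ad, U^\ad) = \Mass(G^\ad, U^{\ad,\infty}) \cdot \prod_{v \in S \setminus \infty} [G^\ad(K_v) : \pr(R_v^\times)]^{-1}.
\]
The essential local computation is then that for $v \in S \setminus \infty$ the index $d_v$ of $D_v$ equals $n$, and the identifications $D_v^\times \simeq \Z \times O_{D_v}^\times$, $K_v^\times \simeq n\Z \times O_v^\times$ (the factor $n$ reflecting the ramification of $D_v/K_v$) force $G^\ad(K_v)/\pr(R_v^\times) \cong \Z/n\Z$. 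The resulting correction factor $n^{-(|S|-|\infty|)}$ multiplies the preceding theorem's formula (whose leading constant is $n^{-(|\infty|-1)}$ after absorbing the sign $(-1)^{[K:\Q]}$ into $|\zeta_K(-1)|$ via the functional equation in the number field case) to give $c^\ad = n^{-(|S|-1)}$, as claimed.

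The identical volume argument applies to $G_1$, but now the local index is trivial: since $v \in S \setminus \infty \subset S_D$, the group $G_1(K_v)$ is itself compact and sits inside the unique maximal compact $O_{D_v}^\times$, forcing $R_{v,1}^\times = O_{D_v}^\times \cap G_1(K_v) = G_1(K_v)$. Therefore $\Mass(G_1, U_1)$ is independent of $S$ and equals $\Mass(G_1, U_1^\infty)$, which is the formula in the preceding theorem. The main bookkeeping issue I foresee is justifying the sign identity $\prod_{i=1}^{n-1} \zeta_K(-i) = \prod_{i=1}^{n-1} |\zeta_K(-i)|$ in the function field case, which should follow from the explicit form of $\zeta_K(s)$ combined with the Riemann hypothesis for curves.
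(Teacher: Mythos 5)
Your proof is correct and follows essentially the same route as the paper: both reduce to the $S=\infty$ case of the preceding theorem via the volume interpretation of Lemma~\ref{22}, using the local index $[G^\ad(K_v):\pr(O_{D_v}^\times)]=n$ (from $D_v^\times\simeq \Z\times O_{D_v}^\times$) to produce the factor $n^{-(|S|-|\infty|)}$ for $G^\ad$, and the fact that norm-one elements are units in $O_{D_v}$ to show $\Mass(G_1,U_1)$ is independent of $S$. Your closing remark on the positivity of $\prod_{i=1}^{n-1}\zeta_K(-i)$ in the function field case (via the Riemann hypothesis for curves) is a point the paper passes over silently, and it is correctly identified and resolvable as you say.
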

\begin{proof}
We write $\Mass(G^\ad, U^\ad, S)$ for $\Mass(G^\ad, U^\ad)$ to
emphasize the dependence of the mass on $S$. We have
\begin{equation}\label{eq:3.26}
\begin{split}
  \Mass(G^\ad,U^\ad,S)& =\frac{\vol(G^\ad(K)\backslash
  G^\ad(\A^S))}{\vol(U)} \\
  & = \frac{\vol(G^\ad(K)\backslash
  G^\ad(\A^\infty))}{\vol(\prod_{v\in S-\infty} G^\ad(O_v)\cdot U)}
  \cdot \prod_{v\in S-\infty} \left [ \frac{\vol(G^\ad(K_v))}{\vol(G^\ad
  (O_v))} \right ]^{-1} \\
  & = \frac{1}{n^{|S|-|\infty|}} \frac{\vol(G^\ad(K)\backslash
  G^\ad(\A^\infty))}{\vol(\prod_{v\in S-\infty} G^\ad(O_v)\cdot U)}\\
  & = \frac{1}{n^{|S|-|\infty|}} \cdot \Mass(G^\ad,U^\ad, \infty).
\end{split}  
\end{equation}
Here $G^\ad(O_v)=O_{D_v}^\times/O_{v}^\times$ where $O_{D_v}$ is the
valuation ring in the division algebra $D_v$, and we use
the isomorphism $G^\ad(K_v)/G^\ad(O_v)\simeq \Z/n\Z$. 
The computation above reduces to the case where $S=\infty$. 
Using the formulas (\ref{eq:3.18}) and (\ref{eq:3.21}) we compute the
factor 
\[ c^\ad=\frac{1}{n^{|S|-|\infty|}} \cdot
\frac{1}{n^{|\infty|-1}}=\frac{1}{n^{|S|-1}}. \]
This settles the formula for $\Mass(G^\ad, U^\ad)$. 
Using $G_1(K_v)=G_1(O_v)$ for $v\in S$,
the same computation as in (\ref{eq:3.26}) shows that $\Mass(G_1,U_1,
S)=\Mass(G_1, U_1,\infty)$, i.e.
$\Mass(G_1,U_1)$ is independent of $S$. Therefore, the formula for
$\Mass(G_1,U_1)$ is given by (\ref{eq:3.19}) and
(\ref{eq:3.22}), respectively. \qed
\end{proof}



We now show the following comparison result. 

\begin{cor}\label{38}
  Let $R$ be any $A$-order in $D$. 
  We have
  \begin{equation}
    \label{eq:3.27}
    \Mass(G^\ad, U^\ad)= c(S,U) \cdot \Mass(G_1,U_1), 
  \end{equation}
where 
\begin{equation}
  \label{eq:3.28}
  c(S,U)=
  \begin{cases}
   n^{-(|S|-1)} [\wh A^\times: \Nr(U)]  & \text{if $K$ is a
   function field;} \\ 
   2^{-(|S|-|\infty|-1)} [\wh A^\times: \Nr(U)]  & \text{if $K$ is a
   totally real number field.} \\  
  \end{cases}
\end{equation}
\end{cor}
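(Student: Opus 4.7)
The plan is to deduce the corollary as a direct consequence of Lemma~\ref{35} together with the explicit mass formulas for maximal orders recorded in Theorem~\ref{37}. The key point is that Lemma~\ref{35} already isolates the dependence on $R$ through the factor $[\wh A^\times : \Nr(U)]$ and reduces the computation of the constant to the single case where $R$ is replaced by a maximal overorder $\wt R$; the rest is arithmetic bookkeeping.

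First I would fix a maximal $A$-order $\wt R\supset R$ and set $\wt U = (\wt R\otimes_A \wh A)^\times$, so Lemma~\ref{35} gives
\[
c(S,U) = c(S,\wt U)\cdot [\wh A^\times : \Nr(U)].
\]
For a maximal order, the reduced norm $\Nr: \wt U \to \wh A^\times$ is surjective (this is the well-known local fact that $\Nr(O_{D_v}^\times) = O_v^\times$ at every non-Archimedean $v$, whether $v$ ramifies in $D$ or not), so the factor $[\wh A^\times : \Nr(\wt U)] = 1$ and it remains only to pin down $c(S,\wt U) = \Mass(G^\ad,\wt U^\ad)/\Mass(G_1,\wt U_1)$.

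To compute $c(S,\wt U)$ I would simply divide the two formulas in Theorem~\ref{37}. Both expressions share the common factor $\prod_{i=1}^{n-1}|\zeta_K(-i)|\cdot \prod_{v\in S_D\cap V^K_f}\lambda_v$, so this cancels and one is left with the ratio $c^\ad/c_1$. In the function field case $c_1=1$ and $c^\ad = n^{-(|S|-1)}$, yielding $c(S,\wt U) = n^{-(|S|-1)}$ at once. In the totally real number field case one has $n=2$ (the definiteness hypothesis forces $D$ to be a totally definite quaternion algebra) and $|\infty|=[K:\Q]$, so $c_1 = 2^{-|\infty|}$ and $c^\ad = 2^{-(|S|-1)}$, giving
\[
c(S,\wt U) \;=\; \frac{2^{-(|S|-1)}}{2^{-|\infty|}} \;=\; 2^{-(|S|-|\infty|-1)}.
\]

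Inserting these two values back into Lemma~\ref{35} produces the formula for $c(S,U)$ asserted in the corollary, and Theorem~\ref{37} already provides $\Mass(G^\ad,U^\ad) = c(S,U)\cdot \Mass(G_1,U_1)$ by definition of $c(S,U)$ in \eqref{eq:3.14}. There is no real obstacle here; the only step requiring attention is the arithmetic of the exponents in the number field case, where one must carefully track the substitution $n=2$ and the identification $|\infty| = [K:\Q]$ in order to match the factor $2^{-(|S|-|\infty|-1)}$ appearing in the statement. Once these substitutions are handled correctly, the corollary follows immediately.
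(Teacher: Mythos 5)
Your proposal is correct and follows essentially the same route as the paper: the paper likewise computes $c(S,\wt U)$ for a maximal order by dividing the two formulas of Theorem~\ref{37} (obtaining $n^{-(|S|-1)}$ resp.\ $2^{-(|S|-|\infty|-1)}$) and then invokes Lemma~\ref{35} to pass to an arbitrary order $R$. Your additional remark that $\Nr(\wt U)=\wh A^\times$ for a maximal order is a correct consistency check, already implicit in the exact sequences used in the proof of Lemma~\ref{35}.
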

\begin{proof}
  When $R$ is a maximal order, we compute using Theorem~\ref{37}
  \begin{equation}
    \label{eq:3.29}
   c(S,\wt U)=\begin{cases}
   n^{-(|S|-1)}   & \text{if $K$ is a
   function field;} \\ 
   2^{-(|S|-|\infty|-1)}  & \text{if $K$ is a
   totally real number field.}\\  
  \end{cases}
  \end{equation}
The statement then follows from Lemma~\ref{35}. \qed
\end{proof}

The proof of Corollary~\ref{38} when $R$ is a maximal $A$-order is ad
hoc. Namely, this is derived after knowing both $\Mass(G^\ad, U^\ad)$
and $\Mass(G_1,U_1)$.




\section{Mass formulas for arbitrary $A$-orders $R$}
\label{sec:04}

In the previous section 
we obtain the formulas for $\Mass(D,R)$,
$\Mass(G^\ad, 
U^\ad)$ and $\Mass(G_1,U_1)$ in the case where 
$R$ is maximal. 
We now consider the case of arbitrary $A$-orders $R$. 
Using Theorem~\ref{32} and
Corollary~\ref{38}, one only needs to know any of them.
We derive a formula for $\Mass(D,R)$.

\subsection{More notations}
\label{sec:41}
Let $A$ be any Dedekind domain and let $K$ be the fraction field of
$R$. Let $V$ be a finite-dimensional $K$-vector space. 
For any two (full) $A$-lattices $X_1$ and $X_2$, 
let $\chi(X_1,X_2)$ be the unique fractional ideal of
$A$ that is characterized by the following properties 
(See Serre \cite[Chapter III, Section 1]{serre:lf}):
\begin{itemize}
\item If $X_2\subset X_1$ and $X_1/X_2\simeq A/\grp$ for a non-zero
  prime ideal 
  $\grp\subset A$, then $\chi(X_1,X_2)=\grp$. 
\item $\chi(X_1, X_2)=\chi(X_2,X_1)^{-1}$ for any two $A$-lattices
  $X_1$ and $X_2$ in $V$.
\item $\chi(X_1,X_2) \chi(X_2,X_3)=\chi(X_1,X_3)$ for any three
  $A$-lattices $X_1, X_2$ and $X_3$ in $V$. 
\end{itemize}

When $K$ is a global field, 
we define $|I|$ to be $|A/I|$ for any
non-zero integral ideal $I\subset A$ and extend the definition to
fractional ideals by \[ |I_1 I_2^{-1}|=|I_1| |I_2|^{-1}\]
for non-zero integral
ideals $I_1$ and $I_2$ of $A$. In this case let $\wh A$ denote the finite
completion of $A$ and $\wh K:=\wh A\otimes_A K$. Put $\wh X:=X\otimes_A
\wh A$ and $\wh V:=V\otimes_K \wh K$. 
Then for any Haar measure on $\wh V$ one has
\begin{equation}
  \label{eq:4.1}
  |\chi(X_1,X_2)|=\frac{\vol(\wh X_1)}{\vol(\wh X_2)}. 
\end{equation}

Now we define the {\it discriminant} of an $A$-lattice with
respect to a bilinear form on $V$ (for any Dedekind domain $A$). 
Let $T:V\times V\to
K$ be a non-degenerate $K$-bilinear map. 
Put $n=\dim_K V$. For any $K$-basis
$E=\{e_1,e_2, \dots, e_n\}$ of $V$, the {\it discriminant of $E$} with
respect to $T$ is defined to be 
\begin{equation}
  \label{eq:4.2}
  D_T(E):=\det(T(e_i,e_j))\in K. 
\end{equation}
For an $A$-lattice $X$ in $V$, the {\it discriminant of $X$} with
respect to $T$ is defined to be the fractional ideal generated by $D_T(E)$
\begin{equation}
  \label{eq:4.3}
  \grd_{T}(X):=(D_T(E))_E\subset K
\end{equation}
for all $K$-bases $E$ contained in $X$.
Computation of discriminants 
can be reduced to the local computation, namely, we have
\begin{equation}
  \label{eq:4.4}
  \grd_T(X)\otimes_A A_\grp =\grd_T(X_\grp), \quad
  X_\grp:=X\otimes_A A_\grp, 
\end{equation} 
where $A_\grp$ is the completion of $A$ at the non-zero prime ideal
$\grp$.  

If $X_1$ and $X_2$ are two $A$-lattices in $V$, then one has the formula
\cite[Chap.~III, \S~2, Proposition 5, p.~49]{serre:lf}
\begin{equation}
  \label{eq:4.5}
  \grd_T(X_2)=\grd_T(X_1) \chi(X_1,X_2)^2. 
\end{equation}
In particular, if $X_2\subset X_1$ then $\grd_T(X_2)=\grd_T(X_1)
\gra^2$, where $\gra=\chi(X_1,X_2)$, which is an integral ideal of $A$.

Now we define the {\it reduced discriminant} of an $A$-lattice in a
{\it central simple algebra} over $K$; some authors simply call this the
{\it discriminant} of the lattice. Let $B$ be a central 
simple $K$-algebra and $X$ be an $A$-lattice in $B$. Let $T:B\times
B\to K$ be the non-degenerate $K$-bilinear form defined by 
\[ T(x,y):=\Tr(x\cdot y), \]
where $\Tr:B\to K$ is the reduced trace from $B$ to $K$. Then 
$\grd_T(X)$ is defined and it can be shown to be the square of a
unique fractional ideal $\gra$ in $K$. 
The {\it reduced discriminant of $X$}, 
denoted by $\grd(X)$, is defined to this fractional ideal $\gra$,
namely, the square root of $\grd_T(X)$. It is easy to see that the
association $X \mapsto \grd(X)$ commutes with finite etale base changes and 
localizations. Namely, if $A'$ is a
finite etale extension or a localization of $A$ then one has
\begin{equation}
  \label{eq:4.6}
  \grd(X\otimes_A A')=\grd(X)\otimes_A A'. 
\end{equation}

\subsection{Computation of $\Mass(D,R)$}
\label{sec:42}
We return to compute $\Mass(D, R)$ where $R$ is any $A$-order.
Let $\wt R$ be a maximal $A$-order in $D$ containing $R$.    
The masses $\Mass(D,\wt R)$ and $\Mass(D,R)$ differ by the
factor
\begin{equation}
  \label{eq:4.7}
  \prod_{v\not \in S} [\wt R^\times_v: R_v^\times],
\end{equation}

Put $\kappa(R_v):=R_v/\rad(R_v)$, where $\rad(R_v)$ denotes the
Jacobson radical of $R_v$. 

\begin{lemma}\label{41} \
  
\begin{enumerate}
\item We have
  \begin{equation}
    \label{eq:4.8}
 [\wt R_v:R_v]=\frac{|\grd(R_v)|}{|\grd(\wt R_v)|}.   
  \end{equation}
\item We have
  \begin{equation}\label{eq:4.9}
  [\wt R_v^\times:R_v^\times]=\frac{|\grd(R_v)|}{|\grd(\wt
    R_v)|} \cdot \frac{|\kappa(\wt R_v)^\times|/|\kappa(\wt
    R_v)|}{|\kappa(R_v)^\times|/|\kappa(R_v)|}.  
  \end{equation}
\end{enumerate}
\end{lemma}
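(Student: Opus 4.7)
The plan is to derive (1) directly from formula (\ref{eq:4.5}) relating discriminants of nested lattices, and to derive (2) from (1) by comparing the additive Haar measures of $R_v^\times$ and $\wt R_v^\times$, using the description of $R_v^\times$ as the preimage of $\kappa(R_v)^\times$ under the reduction map.

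For (1), since $R_v\subset \wt R_v$, formula (\ref{eq:4.5}) applied to $T(x,y)=\Tr(xy)$ gives
\[
\grd_T(R_v)=\grd_T(\wt R_v)\cdot \chi(\wt R_v,R_v)^2.
\]
Taking square roots of these integral ideals (which is well defined since $\grd_T(R_v)=\grd(R_v)^2$ and $\grd_T(\wt R_v)=\grd(\wt R_v)^2$) yields $\grd(R_v)=\grd(\wt R_v)\cdot \chi(\wt R_v,R_v)$, so $\chi(\wt R_v,R_v)=\grd(R_v)\grd(\wt R_v)^{-1}$. The defining properties of $\chi$ together with the structure theorem for finitely generated modules over the DVR $A_v$ show that for $X_2\subset X_1$ the ideal $\chi(X_1,X_2)$ is integral with $|\chi(X_1,X_2)|=[X_1:X_2]$. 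Applying this to $R_v\subset \wt R_v$ produces (\ref{eq:4.8}).

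For (2), fix an additive Haar measure $dx$ on $D_v$. I first claim the identity $\vol(R_v^\times)=\vol(R_v)\cdot |\kappa(R_v)^\times|/|\kappa(R_v)|$, and similarly for $\wt R_v$. Indeed, $R_v$ is a finite $A_v$-order, hence complete and semi-local; I will verify that an element $x\in R_v$ is invertible iff its reduction in $\kappa(R_v)$ is, so $R_v^\times$ is the preimage of $\kappa(R_v)^\times$ under the surjective additive map $R_v\twoheadrightarrow \kappa(R_v)$, and counting cosets of $\rad(R_v)$ gives the identity. Next, for any $g\in \wt R_v^\times$, left multiplication $L_g$ is an additive automorphism of $D_v$ with $L_g(\wt R_v)=\wt R_v$, so its modulus equals $1$; hence, if $\{g_i\}$ is a system of coset representatives for $\wt R_v^\times/R_v^\times$, the translates $g_iR_v^\times$ all have the same additive volume, and one obtains
\[
[\wt R_v^\times:R_v^\times]=\frac{\vol(\wt R_v^\times)}{\vol(R_v^\times)}=\frac{\vol(\wt R_v)}{\vol(R_v)}\cdot \frac{|\kappa(\wt R_v)^\times|/|\kappa(\wt R_v)|}{|\kappa(R_v)^\times|/|\kappa(R_v)|}.
\]
Combining with $[\wt R_v:R_v]=\vol(\wt R_v)/\vol(R_v)$ and part (1) yields (\ref{eq:4.9}).

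The one nonformal step to check carefully is the preimage description of $R_v^\times$ in the (possibly noncommutative and merely semi-local) order $R_v$; for this one uses that $1+\rad(R_v)\subset R_v^\times$ (by completeness of $R_v$ in its $\rad(R_v)$-adic topology) and that nonunits in a semi-local ring lie in a maximal two-sided ideal, so a lift of a unit modulo $\rad(R_v)$ is automatically a unit. Everything else is formal bookkeeping with Haar measures, moduli of left multiplication by units, and the discriminant relation (\ref{eq:4.5}).
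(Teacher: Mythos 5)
Your proof is correct and follows essentially the same route as the paper: part (1) via the discriminant relation $\grd(R_v)=\grd(\wt R_v)\chi(\wt R_v,R_v)$ together with $|\chi(\wt R_v,R_v)|=[\wt R_v:R_v]$, and part (2) via additive Haar measure on $D_v$ and the identity $\vol(R_v^\times)=\vol(R_v)\cdot|\kappa(R_v)^\times|/|\kappa(R_v)|$. You merely supply details the paper leaves implicit (the square-root step from (\ref{eq:4.5}) and the fact that $R_v^\times$ is the preimage of $\kappa(R_v)^\times$ under reduction modulo the Jacobson radical), all of which are correct.
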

\begin{proof}
  (1) We have $[\wt R_v:R_v]=|\chi(\wt R_v, R_v)|$\ from
      (\ref{eq:4.1}) and
      $\grd(R_v)=\grd(\wt R_v)\chi(\wt R_v, R_v)$. Then we get
      $|\grd(R)|=|\grd(\wt R_v)|\cdot [\wt R_v: R_v]$ and
      (\ref{eq:4.8}). 

  (2) For any Haar measure on $D_v$ we have
\[ [\wt R_v^\times: R_v^\times]=
  \frac{\vol(\wt R_v^\times)}{\vol(R_v^\times)}=\frac{\vol(\wt
  R_v)}{\vol(R_v)} \cdot \frac{|\kappa(\wt R_v)^\times|/|\kappa(\wt
    R_v)|}{|\kappa(R_v)^\times|/|\kappa(R_v)|} \] 
Then we obtain the formula (\ref{eq:4.9}) from the formula
  (\ref{eq:4.8}). \qed
\end{proof}

For any non-Archimedean place $v\in S$, we define
$R_v$ to be the unique maximal order $O_{D_v}$ in the
division algebra $D_v$, noting that this is not the completion of $R$,
which does not make sense. 
For any non-Archimedean place $v$, we define
\begin{equation}
  \label{eq:4.10}
  \lambda_v(R_v):=\frac{|\grd(R_v)|}{|\kappa(R_v)^\times|/|\kappa(R
  _v)|}\cdot \prod_{1\le i\le n} (1-q_v^{-i}). 
\end{equation}
Clearly $\lambda_v(R_v)=1$ when $R_v\simeq \Mat_n(A_v)$. 
Now we prove the following formula.

\begin{thm}\label{42} Notations as above. We have 
  \begin{equation}
    \label{eq:4.11}
    \Mass(D,R)=h_A \cdot \frac{1}{n^{|S|-1}} \cdot
    \prod_{i=1}^{n-1}|\zeta_K(-i)| \cdot \prod_{v\in V^K_f} \lambda_v(R_v).
  \end{equation}
\end{thm}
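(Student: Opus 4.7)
My plan is to reduce the proof to the maximal-order case already handled by Theorems~\ref{32} and \ref{37}, and then to accumulate the local correction coming from the inclusion $R\subset \wt R$ via Corollary~\ref{33} and Lemma~\ref{41}.

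Fix a maximal $A$-order $\wt R$ containing $R$. Combining Theorems~\ref{32} and \ref{37} yields
\[ \Mass(D,\wt R) = \frac{h_A}{n^{|S|-1}}\cdot \prod_{i=1}^{n-1}|\zeta_K(-i)|\cdot \prod_{v\in S_D\cap V^K_f} \lambda_v, \]
where $\lambda_v$ is the factor in (\ref{eq:3.23}). The first step is a local check showing that $\lambda_v(\wt R_v)=\lambda_v$ for $v\in S_D\cap V^K_f$ and $\lambda_v(\wt R_v)=1$ for all other $v\in V^K_f$. For $v\notin S_D$ one has $\wt R_v\simeq \Mat_n(A_v)$ with residue ring $\Mat_n(k(v))$, and a direct substitution into (\ref{eq:4.10}) cancels the Euler factor $\prod_{i=1}^n(1-q_v^{-i})$ against $|\kappa(\wt R_v)^\times|/|\kappa(\wt R_v)|$. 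For $v\in S_D\cap V^K_f$, writing $D_v=\Mat_m(D'_v)$ with $D'_v$ the local central division algebra of index $d_v$ and $n=md_v$, one has $\kappa(\wt R_v)\simeq \Mat_m(\F_{q_v^{d_v}})$; together with the standard formula for the reduced discriminant of $\wt R_v$, this collapses the Euler factor to exactly $\prod_{d_v\mid i,\,1\le i\le n-1}(q_v^i-1)^{-1}$, leaving $\lambda_v(\wt R_v)=\prod_{1\le i\le n-1,\,d_v\nmid i}(q_v^i-1)=\lambda_v$. Hence the maximal-order formula can be rewritten as
\[ \Mass(D,\wt R) = \frac{h_A}{n^{|S|-1}}\cdot \prod_{i=1}^{n-1}|\zeta_K(-i)|\cdot \prod_{v\in V^K_f}\lambda_v(\wt R_v). \]

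Next, Corollary~\ref{33} gives
\[ \Mass(D,R) = \Mass(D,\wt R)\cdot [\wh{\wt R}^\times : \wh R^\times] = \Mass(D,\wt R)\cdot \prod_{v\notin S}[\wt R_v^\times : R_v^\times]. \]
The key identity is that by Lemma~\ref{41}(2) together with the definition (\ref{eq:4.10}) of $\lambda_v(R_v)$,
\[ [\wt R_v^\times : R_v^\times] = \frac{\lambda_v(R_v)}{\lambda_v(\wt R_v)}, \]
because the common Euler factor $\prod_{i=1}^n(1-q_v^{-i})$ cancels between $\lambda_v(R_v)$ and $\lambda_v(\wt R_v)$, leaving precisely the right-hand side of (\ref{eq:4.9}).

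Finally, for $v\in S\cap V^K_f$ the convention preceding (\ref{eq:4.10}) sets $R_v=O_{D_v}=\wt R_v$, so $\lambda_v(R_v)=\lambda_v(\wt R_v)$ at such $v$ and the product $\prod_{v\in V^K_f}\lambda_v(\wt R_v)\cdot\prod_{v\notin S}\lambda_v(R_v)/\lambda_v(\wt R_v)$ telescopes to $\prod_{v\in V^K_f}\lambda_v(R_v)$, yielding (\ref{eq:4.11}). The step I expect to be the main obstacle is the local identification $\lambda_v(\wt R_v)=\lambda_v$ at ramified finite places: it requires unpacking the explicit structure of the maximal order in a local central division algebra (uniformizer, residue field extension $\F_{q_v^{d_v}}/k(v)$, and the reduced discriminant of $\Mat_m(O_{D'_v})$) and then matching the two forms of the ``local Euler factor'' used here versus in Eichler's and Denert--Van Geel's maximal-order mass formulas.
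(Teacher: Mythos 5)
Your proposal is correct and follows essentially the same route as the paper: reduce to the maximal order via Corollary~\ref{33}, and verify the local identity $\lambda_v\cdot[\wt R_v^\times:R_v^\times]=\lambda_v(R_v)$ using Lemma~\ref{41} together with the explicit residue ring $\Mat_m(\F_{q_v^{d_v}})$ and reduced discriminant of the local maximal order (the paper's computation (\ref{eq:4.15}), which is exactly your identification $\lambda_v(\wt R_v)=\lambda_v$). Your reorganization into the two steps ``$\lambda_v(\wt R_v)=\lambda_v$'' and ``$[\wt R_v^\times:R_v^\times]=\lambda_v(R_v)/\lambda_v(\wt R_v)$'' followed by telescoping is just a repackaging of the same computation, and your handling of the convention $R_v=O_{D_v}$ at finite places of $S$ is consistent with the paper.
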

\begin{proof}
  By Theorem~\ref{32}, Corollary~\ref{33}  and Theorem~\ref{37} we have
  \begin{equation}
    \label{eq:4.12}
    \Mass(D,R)=h_A\cdot \frac{1}{n^{|S|-1}} \cdot
    \prod_{i=1}^{n-1} | \zeta_K(-i) |  \cdot \prod_{v} 
    (\lambda_v \cdot [\wt R_v^\times :R_v^\times]),
  \end{equation}
  where $\lambda_v$ is defined in (\ref{eq:3.23}). 
  Thus, it suffices to check 
  \begin{equation}
    \label{eq:4.13}
    \lambda_v \cdot [\wt R_v^\times :R_v^\times]=\lambda_v(R_v).
  \end{equation}
The left hand side of (\ref{eq:4.13}) is equal to (using Lemma~\ref{41})
\begin{equation}
  \label{eq:4.14}
  \lambda_v \cdot \frac{|\grd(R_v)|}{|\grd(\wt R_v)|} 
    \cdot \frac{|\kappa(\wt R_v)^\times|/|\kappa(\wt
    R_v)|}{|\kappa(R_v)^\times|/|\kappa(R_v)|}. 
\end{equation}
Suppose $D_v=\Mat_{m}(\Delta)$, where $\Delta$ is a central division
algebra with index $d$, thus $n=dm$. 
Note that
\begin{equation}
  \label{eq:4.15}
  \begin{split}
  &\lambda_v \cdot \frac{1}{|\grd(\wt R_v)|}
  \cdot |\kappa(\wt R_v)^\times|/|\kappa(\wt R_v)| \\
  = &\prod_{1\le i\le n-1, d\nmid i} (q_v^i-1)\cdot \frac{1}{q_v^{m^2
    \cdot d(d-1)/2}} \cdot \prod_{1\le j\le m}
    (1-q_v^{-dj}) \\ 
  = &\prod_{1\le i\le n} (1-q_v^{-i}).\\ 
\end{split}
\end{equation}
This verifies the equality (\ref{eq:4.13}) 
and completes the proof of the theorem. \qed
\end{proof}

      
In the rest of this section we restrict to the case $n=2$. If the 
order $R_v$ is
not isomorphic to $\Mat_2(A_v)$, then define 
the {\it Eichler symbol} $e(R_v)$ by
\begin{equation}
  \label{eq:4.16}
  e(R_v)=
  \begin{cases}
    1  &  \text{if $\kappa(R_v)=\kappa (v)\times \kappa(v)$;}\\
    -1 & \text{if $\kappa(R_v)$ is a quadratic field extension of
       $\kappa(v)$;}\\
   0  & \text{if $\kappa(R_v)=\kappa(v)$.}\\ 
  \end{cases}
\end{equation}


\begin{cor}\label{43}
  Assume that $n=2$. Then we have
  \begin{equation}
    \label{eq:4.17}
    \Mass(D,R)=\frac{h_A |\zeta_K(-1)|}{2^{|S|-1}}\prod_{v\in S_R}
    |\grd(R_v)| \frac{(1-q_v^{-2})}{(1-e(R_v) q_v^{-1})}. 
  \end{equation}
  where $S_R$ consists of all non-Archimedean places $v$ of $K$ such
  that either $v$ is ramified in $D$ or $R_v$ is not maximal. 
\end{cor}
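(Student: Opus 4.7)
The plan is to deduce Corollary \ref{43} directly from Theorem \ref{42} specialized to $n=2$, by computing the local factors $\lambda_v(R_v)$ case by case in terms of the Eichler symbol $e(R_v)$ and the reduced discriminant. With $n=2$ the prefactor of Theorem \ref{42} becomes $h_A|\zeta_K(-1)|/2^{|S|-1}$ and the local factor simplifies to
\begin{equation*}
\lambda_v(R_v)=\frac{|\grd(R_v)|}{|\kappa(R_v)^\times|/|\kappa(R_v)|}\cdot (1-q_v^{-1})(1-q_v^{-2}),
\end{equation*}
so it remains to match this, place by place, against the local factors displayed in Corollary~\ref{43} and to verify that the factor is trivial at every place outside $S_R$.

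First I would handle the ``good'' places $v\notin S_R$: here $R_v\simeq\Mat_2(A_v)$, so $|\grd(R_v)|=1$, $\kappa(R_v)=\Mat_2(k(v))$, and a direct count gives $|\kappa(R_v)^\times|/|\kappa(R_v)|=(1-q_v^{-1})(1-q_v^{-2})$, yielding $\lambda_v(R_v)=1$. This confirms that only places in $S_R$ contribute, so the infinite product in Theorem \ref{42} reduces to a product over $S_R$, matching the shape of (\ref{eq:4.17}).

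For $v\in S_R$ I would split into cases according to the structure of the semisimple algebra $\kappa(R_v)=R_v/\rad(R_v)$. When $v$ is ramified in $D$ and $R_v$ is maximal, $R_v=O_{D_v}$ and $\kappa(R_v)=\F_{q_v^2}$, so $e(R_v)=-1$ and $|\grd(R_v)|=q_v$; a short calculation gives $\lambda_v(R_v)=q_v-1$, which agrees with $|\grd(R_v)|(1-q_v^{-2})/(1+q_v^{-1})$. When $D_v$ is split but $R_v$ is non-maximal, the residue algebra of an order in $\Mat_2(K_v)$ is one of $k(v)\times k(v)$, a quadratic field extension of $k(v)$, or $k(v)$ itself, corresponding to $e(R_v)=1,-1,0$; in each case one computes $|\kappa(R_v)^\times|/|\kappa(R_v)|$ as $(1-q_v^{-1})^2$, $1-q_v^{-2}$, or $1-q_v^{-1}$, and each time the expression for $\lambda_v(R_v)$ collapses to $|\grd(R_v)|(1-q_v^{-2})/(1-e(R_v)q_v^{-1})$, exactly the factor appearing in (\ref{eq:4.17}).

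None of this is conceptually hard; the main (and only) obstacle is the bookkeeping in the case analysis and the verification that every possible residue algebra of a quaternion order has been accounted for. This reduces to the well-known classification of orders in a quaternion algebra over a local field via their residue algebras, after which the identity $\lambda_v\cdot[\wt R_v^\times:R_v^\times]=\lambda_v(R_v)$ established inside the proof of Theorem \ref{42} furnishes the required comparison in closed form.
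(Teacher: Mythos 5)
Your proposal is correct and follows the paper's own route: specialize Theorem~\ref{42} to $n=2$ and verify the local identity $|\kappa(R_v)^\times|/|\kappa(R_v)|=(1-q_v^{-1})(1-e(R_v)q_v^{-1})$, which the paper dismisses as ``clear'' and you spell out via the standard classification of residue algebras $\kappa(R_v)$ of quaternion orders. No substantive difference.
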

\begin{proof}
  By Theorem~\ref{42}, it suffices to check 
  \begin{equation}
    \label{eq:4.18}
    \frac{|\kappa(R_v)^\times|}{|\kappa(R_v)|}=(1-q_v^{-1})(1-e(R_v)
    q_v^{-1}).
  \end{equation}
But this is clear. \qed
\end{proof}

In the case where $K$ is a totally real number field and $S=\infty$ 
Corollary~\ref{43} was obtained by K\"orner \cite[Theorem 1]{korner:1987}. 


\section{Mass formulas for types of orders}
\label{sec:05}

Let $\scrR$
be the {\it genus} of $R$, that is, the set consists of all $A$-orders in
$D$ which are isomorphic to $R$ locally everywhere. A {\it type} of
$R$ is a $D^\times$-conjugacy class of orders 
in $\scrR$. The
set of $D^\times$-conjugacy classes of orders in $\scrR$ is denoted by
$T(R)$. This is a 
finite set and its cardinality $|T(R)|$, denoted by $t(R)$, is called
the {\it type number} of $R$. 

\begin{defn}
  Let $\{R_1, \dots, R_t\}$ be a set of $A$-orders representing the
  $D^\times$-conjugacy classes in $\calR$. Define the {\it mass of the
  types of $R$} by
\begin{equation}
  \label{eq:5.1}
  \Mass(T(R)):=\sum_{i=1}^t\,  [N(R_i):K^\times]^{-1},
\end{equation}
where $N(R_i)$ is the normalizer of $R_i$ in $D^\times$. 
\end{defn}

We know that there is a natural bijection
\begin{equation}
  \label{eq:5.2}
  T(R)\simeq D^\times \backslash G(\A^S)/\calN(\wh R), 
\end{equation}
where $\calN(\wh R)$ is the normalizer of $\wh R$ in 
$\wh D^\times=G(\A^S)$.  

The following result evaluates $\Mass(T(R))$. In the computation, one
also shows that each term $[N(R_i):K^\times]$ is finite so that
$\Mass(T(R))$ is defined. 

\begin{thm}\label{52}
  We have
  \begin{equation}
    \label{eq:5.3}
    \Mass(T(R))=\frac{1}{n^{|S|-1}} \cdot
    \prod_{i=1}^{n-1}|\zeta_K(-i)|\cdot 
    \prod_{v} \lambda_v(R_v)  
    \cdot [\calN(\wh R): \A^{S,\times} \wh R^\times]. 
  \end{equation}
\end{thm}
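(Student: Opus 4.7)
The plan is to realize $\Mass(T(R))$ as the mass of a pair $(G^{\ad}, V)$ for a suitable open compact subgroup $V \subset G^{\ad}(\A^S)$, and then to compare it with $\Mass(G^{\ad}, U^{\ad})$ via (2.2), substituting the explicit formula supplied by Theorem~3.2 and Theorem~4.2. I would take $V := \pr(\mathcal{N}(\wh R))$. Since $\ker(\pr) = \A^{S,\times}$ is central and hence contained in $\mathcal{N}(\wh R)$, we have $\pr^{-1}(V) = \mathcal{N}(\wh R)$, so $V$ is an open compact subgroup of $G^{\ad}(\A^S)$ containing $U^{\ad} = \pr(\wh R^\times)$.

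The first step is to identify $\Mass(T(R))$ with $\Mass(G^{\ad}, V)$. The bijection (5.2), combined with the identification $G^{\ad}(K) = D^\times/K^\times$ obtained by applying Hilbert~90 to (3.1), yields $\DS(G^{\ad}, V) \cong T(R)$. For a representative $c \in G(\A^S)$ with associated left order $R_c = D \cap c \wh R c^{-1}$, the stabilizer in $G^{\ad}(K)$ is
\[
G^{\ad}(K)\cap \pr(c)\, V\, \pr(c)^{-1} \;=\; \bigl(D^\times \cap c\,\mathcal{N}(\wh R)\, c^{-1}\bigr)\big/K^\times \;=\; N(R_c)/K^\times,
\]
where the last equality uses the correspondence between $A$-orders in $D$ and $\wh A$-orders in $\wh D$, so that $d \in D^\times$ normalizes $R_c$ iff $d$ normalizes $c\wh R c^{-1}$ inside $\wh D^\times$. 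This simultaneously shows that $[N(R_i):K^\times]$ is finite (so (5.1) is well defined) and that $\Mass(T(R)) = \Mass(G^{\ad}, V)$.

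The second step is the index computation. Applying (2.2) and Lemma~2.2 to the inclusion $U^{\ad} \subset V$ expresses $\Mass(G^{\ad}, V)$ in terms of $\Mass(G^{\ad}, U^{\ad})$ and the generalized index $[U^{\ad}:V]$ defined in (2.3). A short diagram chase shows that $\pr$ induces an isomorphism $\mathcal{N}(\wh R)\big/\A^{S,\times}\wh R^\times \isoto V/U^{\ad}$, because $\ker(\pr|_{\mathcal{N}(\wh R)}) = \A^{S,\times}$ and the preimage of $U^{\ad}$ in $\mathcal{N}(\wh R)$ equals $\A^{S,\times}\wh R^\times$. Hence $[V:U^{\ad}] = [\mathcal{N}(\wh R):\A^{S,\times}\wh R^\times]$.

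Finally I would combine this with Theorem~3.2, which gives $\Mass(G^{\ad}, U^{\ad}) = \Mass(D,R)/h_A$, and the explicit formula of Theorem~4.2 for $\Mass(D,R)$, to arrive at (5.3). The main technical obstacle will be the stabilizer identification in the first step, namely the equality $D^\times \cap c\mathcal{N}(\wh R) c^{-1} = N(R_c)$; once that is in hand, the remainder is routine bookkeeping with the previously established mass formulas and the generalized index convention (2.3).
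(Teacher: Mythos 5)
Your proposal is correct and follows essentially the same route as the paper: identify $\Mass(T(R))$ with $\Mass(G^{\ad},\calN^{\ad})$ where $\calN^{\ad}=\pr(\calN(\wh R))$ via the stabilizer computation $D^\times\cap c\,\calN(\wh R)\,c^{-1}=N(R_c)$, then apply the index relation $[\calN^{\ad}:U^{\ad}]=[\calN(\wh R):\A^{S,\times}\wh R^\times]$ together with Theorems~\ref{32} and~\ref{42}. The only cosmetic issue is the one-off reversal $[U^{\ad}:V]$ versus $[V:U^{\ad}]$, which the generalized index convention (\ref{eq:23}) renders harmless.
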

\begin{proof}
  Let $\calN^\ad$ denote the image of the open subgroup $\calN(\wh
  R)\subset G(\A^S)$ in $G^\ad(\A^S)$. We now show 
  \begin{equation}
    \label{eq:5.4}
    \Mass(T(R))=\Mass(G^\ad, \calN^\ad).
  \end{equation}
  Let $c_1, \dots, c_t\in G(\A^S)$ be representatives for the double
  coset space in (\ref{eq:5.2}). For each $i=1, \dots, t$, put 
  \begin{equation}
    \label{eq:5.5}
    \Gamma_i^\ad:=G^\ad(K)\cap \pr(c_i)\, \calN^\ad\, \pr(c_i)^{-1},
    \quad\text{and}\quad  \Gamma_i:=G(K)\cap c_i\, \calN(\wh R)\,
    c_i^{-1}. 
  \end{equation}
  It is clear that $\Gamma_i^\ad=\Gamma_i/K^\times$. So it suffices to
  show that $\Gamma_i=N(R_i)$. Notice $R_i=D\cap c_i \wh R c_i^{-1}$,
  so $\wh R_i=c_i \wh R c_i^{-1}$. Let $x\in \Gamma_i$. Then $x=c_i y
  c_i^{-1}$ for some $y\in \calN(\wh R)$. Therefore, $c_i^{-1} x
  c_i\in \calN(\wh R)$. This gives $x(c_i \wh R c_i^{-1}) x^{-1}=(c_i
  \wh R c_i^{-1})$. Therefore,
  \[ x\in \Gamma_i \iff x(\wh R_i) x^{-1} =\wh R_i, \]
and hence $\Gamma_i=N(R_i)$. This shows (\ref{eq:5.4}).

Using (\ref{eq:5.4}), we have $\Mass(T(R))=\Mass(G^\ad, U^\ad)\cdot
[\calN^\ad: U^\ad]$. Then formula (\ref{eq:5.3}) follows from
Theorems~\ref{42} and~\ref{32} and 
$[\calN^\ad:U^\ad]=[\calN(\wh R): \A^{S,\times} \wh R^\times]$. \qed 
\end{proof}

\section*{Acknowledgments}

The author thanks F.-T. Wei and J.-K. Yu for helpful discussions. The
paper is reorganized while the author's stay in the Max-Planck-Institut
f\"ur Mathematik. He is grateful to the Institut for kind hospitality
and excellent working environment. 
The author was partially supported by the grants 
MoST 100-2628-M-001-006-MY4 and 103-2918-I-001-009.


\end{document}